\theoremstyle{plain}
\newtheorem{theorem}{Theorem}[section]
\newtheorem{proposition}[theorem]{Proposition}
\newtheorem{lemma}[theorem]{Lemma}
\theoremstyle{definition}
\newtheorem{remark}[theorem]{Remark}
\newtheorem{remarks}[theorem]{Remarks}
\theoremstyle{remark}
\newcounter{numpar}[section]
\renewcommand*{\thenumpar}{\thesection.\arabic{numpar}}
\newcommand*{\newpar}[1]{\refstepcounter{numpar}\medskip\textbf{\thenumpar.#1}}
\numberwithin{equation}{section}
\newcommand*{\wt}{\widetilde}
\newcommand*{\ol}{\overline}
\newcommand*{\dd}{d}     
\newcommand*{\eps}{\varepsilon}
\newcommand*{\cF}{\mathcal F}
\newcommand*{\bbR}{\mathbb R}
\newcommand*{\fM}{\mathfrak M}
\newcommand*{\EE}{\mathsf E}
\newcommand*{\PP}{\mathsf P}
\newcommand*{\la}{\langle}
\newcommand*{\ra}{\rangle}
\newcommand*{\loc}{{\mathrm{loc}}}
\DeclareMathOperator{\Law}{Law}
\begin{document}
\title[Convergence of Integral Functionals]{Convergence of Integral Functionals of One-Dimensional Diffusions}

\author{Aleksandar Mijatovi\'{c}}
\address{Department of Statistics, University of Warwick, UK}
\email{a.mijatovic@warwick.ac.uk}

\author{Mikhail Urusov}
\address{Institute of Mathematical Finance, Ulm University, Germany}
\email{mikhail.urusov@uni-ulm.de}

\thanks{We are grateful to Hans-J\"urgen Engelbert for valuable discussions 
and advice on this topic.  This research was supported
by the LMS grant R.STAA.3037 and through the program ``Research in Pairs''
by the Mathematisches Forschungsinstitut Oberwolfach in 2011.
MU would like to thank the Statistics Department at Warwick, where a part of this work
was carried out, for hospitality and a productive research atmosphere.}

\keywords{Integral functional;
one-dimensional diffusion;
local time;
Bessel process;
Ray-Knight theorem;
Williams theorem}

\subjclass[2010]{60H10, 60J60}

\begin{abstract}
In this expository paper we describe the pathwise behaviour of the 
integral functional
$\int_0^t f(Y_u)\,\dd u$ 
for any
$t\in[0,\zeta]$,
where 
$\zeta$ 
is (a possibly infinite) exit time of a one-dimensional diffusion process
$Y$
from its state space,
$f$
is a nonnegative Borel measurable function
and the coefficients of the SDE solved by 
$Y$
are only required to satisfy weak local integrability conditions.
Two proofs of the deterministic characterisation of the convergence 
of such functionals are given: the problem is reduced in two different ways 
to certain path properties of Brownian motion where either the Williams theorem 
and the theory of Bessel processes or the first Ray-Knight theorem can be applied to 
prove the characterisation.
\end{abstract}

\maketitle


\section{Introduction}
\label{sec:i}

The \textit{Engelbert--Schmidt zero-one law}
states that for a Brownian motion 
$B$
and any nonnegative Borel function 
$f$ the following statements are equivalent:
\begin{enumerate}[(a)]
\item $\PP\left(\int_0^tf(B_s)\dd s<\infty\>\text{for all}\>t\in[0,\infty)\right)>0$;
\label{eq:a}
\item $\PP\left(\int_0^tf(B_s)\dd s<\infty\>\text{for all}\>t\in[0,\infty)\right)=1$;
\label{eq:b}
\item the function $f$ is locally integrable on $\bbR$.
\label{eq:c}
\end{enumerate}
This important property has a plethora of applications. For example it constitutes an 
important step in the Engelbert--Schmidt construction 
of weak solutions of one-dimensional 
SDEs. 
The proof of the zero-one law 
can be found in monograph~\cite[Ch.~3]{KaratzasShreve:91}
or original article~\cite{EngelbertSchmidt:81}.
Note that the  equivalences 
between~\eqref{eq:a},~\eqref{eq:b} and~\eqref{eq:c}
do not contain any information about the behaviour of the integral when 
local integrability of the function 
$f$
fails on a subset of 
$\bbR$.
The precise description of the explosion time of this integral functional
was given in~\cite[Lem.~1]{EngelbertSchmidt:85}.

In this paper we investigate a related problem of 
the convergence of the integral functional
\begin{equation*}
\int_0^t f(Y_u)\,\dd u,\quad t\in[0,\zeta],
\end{equation*}
of a one-dimensional $J$-valued diffusion 
$Y$
that solves an SDE 
up to an exit time
$\zeta$.
The coefficients 
of the SDE 
are required to satisfy only some weak
local integrability conditions on the open interval 
$J$
(see~\eqref{eq:set2} and~\eqref{eq:set3}
for the precise form of the Engelbert--Schmidt conditions satisfied by 
the coefficients)
and the function 
$f\colon J\to[0,\infty]$
is assumed to be Borel measurable.
The main results in this paper (see Theorems~\ref{th:set1},~\ref{th:set2} 
and~\ref{th:set3})
study the integral functional as a process, identify the 
stopping time after which the integral explodes,
and give a deterministic criterion for 
the convergence of the integral functional
at this stopping time.
It turns out that this stopping time 
is the first time the process 
$Y$
hits the set where the local integrability 
condition,
analogous to~\eqref{eq:c} above, fails.

The proof of the results consists of two steps. The first
step, which uses only basic properties of diffusion processes 
and their local times, reduces the original problem 
to a question about the convergence of an integral functional 
of Brownian motion.  In the second step we give 
two proofs for the characterisation of the convergence of this integral functional
of Brownian motion: (i)~Williams' theorem (see~\cite[Ch.~VII, Cor.~4.6]{RevuzYor:99})
and the result on integral functionals of Bessel processes
from Cherny~\cite{Cherny:01} are applied; 
(ii)~a direct approach based on 
the first Ray-Knight theorem (see~\cite[Ch.~XI, Th.~2.2]{RevuzYor:99}) 
is followed.

In~\cite{EngelbertTittel:00} Engelbert and Tittel investigate the convergence 
of the integral functionals of the form
$\int_0^tf(X_s)\,\dd s$,
where 
$f$
is a nonnegative Borel function and 
$X$ 
a strong Markov continuous local martingale.
The analytic condition that characterises the convergence 
of the integral functionals is given in terms of the speed 
measure of 
$X$.
The diffusion 
$Y$
considered in this paper is not necessarily a local martingale. 
However, the process
$s(Y)$,
where 
$s$
is the scale function of 
$Y$,
is
and our characterisation theorems can be deduced from the ones
in~\cite{EngelbertTittel:00}. The proofs of
the main results in~\cite{EngelbertTittel:00} are based on
Lemma~3.1 in~\cite{EngelbertTittel:00},
attributed to Jeulin~\cite{Jeulin:80},
which, together with 
the Ray-Knight theorem, 
implies a version of a zero-one law for Brownian local 
time integrated 
in the space variable against a measure on 
$\bbR$
(see also Assing~\cite{Assing:94}).
This zero-one law for Brownian local time 
is closely related to key
Lemma~\ref{lem:bc1}.
The result in
Lemma~\ref{lem:bc1}
first appeared implicitly 
in the paper of Engelbert and Schmidt~\cite[p.~225--226]{EngelbertSchmidt:87}
and was stated explicitly by Assing and Senf~\cite[Lem.~2]{AssingSenf:97}.
The proofs in~\cite{EngelbertSchmidt:87}
and~\cite{AssingSenf:97} rest on an application of
the Ray-Knight theorem and Shepp's~\cite{Shepp:66} dichotomy result 
for Gaussian processes.  This dichotomy argument was later 
replaced by the abstract but elementary lemma of Jeulin~\cite{Jeulin:80}
(see Assing and Schmidt~\cite[Lem.~A1.7]{AssingSchmidt:98}).

The emphasis in the present paper 
is on understanding the pathwise behaviour of the 
integral functionals of one-dimensional diffusions
directly from the pathwise properties of Brownian motion.
Our proofs are short and are based on a simple direct approach 
which reduces the problem to Brownian motion where either
the Williams theorem and Cherny's results from~\cite{Cherny:01}
or an idea from  Delbaen and Shirakawa~\cite{DelbaenShirakawa:02b},
which circumvents the lemma of Jeulin~\cite{Jeulin:80} mentioned 
above, and an application of the first Ray-Knight theorem 
complete the task.

The rest of the paper is organised as follows. 
Section~\ref{sec:set} describes the setting and states the
main results.  In Section~\ref{sec:pt}
we show how to reduce the main theorems to a problem for Brownian 
motion.  Section~\ref{sec:bc} gives the characterisation for the 
convergence of integral functionals of Brownian motion. 
Sections~\ref{sec:fp} and~\ref{sec:sp} give the two proofs of
Lemma~\ref{lem:bc1}.

\section{The Setting and Main Results}
\label{sec:set}
\newpar{}
\label{par:set0}
First we introduce some common notations used in the sequel.
Let us consider an open interval $J=(l,r)\subseteq\bbR$.

\noindent
\begin{itemize}
\item
By $\ol J$ we denote $[l,r]$.

\item
By $L^1_\loc(J)$ we denote the set of Borel functions $J\to[-\infty,\infty]$,
which are locally integrable on $J$, i.e. integrable on compact subsets of~$J$.

\item
For $x\in J$, $L^1_\loc(x)$ denotes the set of Borel functions $f\colon J\to[-\infty,\infty]$
such that $\int_{x-\eps}^{x+\eps}|f(y)|\,\dd y<\infty$ for some $\eps>0$.

\item
Let $\alpha\in[l,r)$, $\beta\in(l,r]$.
By $L^1_\loc(\alpha+)$ we denote the set of Borel functions $f\colon J\to[-\infty,\infty]$
such that $\int_\alpha^z|f(y)|\,\dd y<\infty$ for some $z\in J$, $z>\alpha$.
The notation $L^1_\loc(\beta-)$ is introduced similarly.
\end{itemize}

We will need the following statement. Its proof is straightforward.

\begin{lemma}
\label{lem:set1}
$L^1_\loc(J)=\bigcap_{x\in J}L^1_\loc(x)$.
\end{lemma}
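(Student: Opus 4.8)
The plan is to prove the set equality $L^1_\loc(J)=\bigcap_{x\in J}L^1_\loc(x)$ by establishing the two inclusions separately. The forward inclusion $L^1_\loc(J)\subseteq\bigcap_{x\in J}L^1_\loc(x)$ is essentially immediate: if $f$ is integrable on every compact subset of $J$, then for any $x\in J$ one can choose $\eps>0$ small enough that the closed interval $[x-\eps,x+\eps]$ lies inside $J$ (possible since $J$ is open), and integrability on this compact set gives $\int_{x-\eps}^{x+\eps}|f(y)|\,\dd y<\infty$, so $f\in L^1_\loc(x)$. Since $x$ was arbitrary, $f$ belongs to the intersection.

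For the reverse inclusion $\bigcap_{x\in J}L^1_\loc(x)\subseteq L^1_\loc(J)$, I would take $f\in L^1_\loc(x)$ for every $x\in J$ and a fixed compact set $K\subset J$; it suffices to treat $K=[a,b]$ with $l<a\le b<r$, since any compact subset is contained in such an interval. The key idea is a compactness (covering) argument. For each $x\in[a,b]$ there is an $\eps_x>0$ with $\int_{x-\eps_x}^{x+\eps_x}|f(y)|\,\dd y<\infty$; the open intervals $(x-\eps_x,x+\eps_x)$ form an open cover of the compact set $[a,b]$, so finitely many of them, say centered at $x_1,\dots,x_n$, already cover $[a,b]$. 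Then
\begin{equation*}
\int_a^b|f(y)|\,\dd y\le\sum_{i=1}^n\int_{x_i-\eps_{x_i}}^{x_i+\eps_{x_i}}|f(y)|\,\dd y<\infty,
\end{equation*}
which shows $f$ is integrable on $K$ and hence $f\in L^1_\loc(J)$.

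The only point requiring a little care—and what I would flag as the main (though still routine) obstacle—is ensuring that the local integrability neighbourhoods chosen from the covering stay inside $J$, so that the bounding sum is genuinely finite. Since each $\eps_x$ comes from the hypothesis $f\in L^1_\loc(x)$, by possibly shrinking $\eps_x$ one may assume $[x-\eps_x,x+\eps_x]\subset J$ without losing the finiteness of the integral over the smaller interval. With this minor adjustment the covering argument goes through verbatim, and combining both inclusions yields the stated equality.
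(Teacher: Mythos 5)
Your proof is correct and is exactly the standard argument the paper has in mind (the paper omits the proof, remarking only that it is straightforward): the forward inclusion is immediate from openness of $J$, and the reverse inclusion follows from the finite-subcover argument on a compact interval $[a,b]\subset J$. Your added care in shrinking each $\eps_x$ so that $[x-\eps_x,x+\eps_x]\subset J$ is the right (minor) point to flag, and nothing further is needed.
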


\newpar{}
\label{par:set1}
Let the state space be $J=(l,r)$, $-\infty\le l<r\le\infty$, and $Y=(Y_t)_{t\in[0,\infty)}$
be a $J$-valued solution of the one-dimensional SDE
\begin{equation}
\label{eq:set1}
\dd Y_t=\mu(Y_t)\,\dd t+\sigma(Y_t)\,\dd W_t,\quad Y_0=x_0,
\end{equation}
on some filtered probability space $(\Omega,\cF,(\cF_t)_{t\in[0,\infty)},\PP)$,
where $x_0\in J$ and $W$ is an $(\cF_t,\PP)$-Brownian motion.
We allow $Y$ to exit its state space $J$ at a finite time in a continuous way.
The exit time is denoted by $\zeta$.
That is to say, $\PP$-a.s. on $\{\zeta=\infty\}$ the trajectories of $Y$ do not exit~$J$,
while $\PP$-a.s. on $\{\zeta<\infty\}$ we have: either
$\lim_{t\uparrow\zeta}Y_t=r$ or $\lim_{t\uparrow\zeta}Y_t=l$.
Then we need to specify the behaviour of $Y$ after $\zeta$ on $\{\zeta<\infty\}$.
In what follows we assume that on $\{\zeta<\infty\}$ the process $Y$
stays at the endpoint of $J$ where it exits after $\zeta$,
i.e. $l$ and $r$ are by convention absorbing boundaries.

Throughout the paper it is assumed that the coefficients 
$\mu$ and $\sigma$ in~\eqref{eq:set1} satisfy
the Engelbert--Schmidt conditions
\begin{gather}
\label{eq:set2}
\sigma(x)\ne0\;\;\forall x\in J,\\
\label{eq:set3}
\frac1{\sigma^2},\frac\mu{\sigma^2}\in L^1_\loc(J).
\end{gather}
Under \eqref{eq:set2} and~\eqref{eq:set3} SDE~\eqref{eq:set1}
has a weak solution, unique in law, which possibly exits
$J$
(see~\cite{EngelbertSchmidt:85}, \cite{EngelbertSchmidt:91},
or~\cite[Ch.~5, Th.~5.15]{KaratzasShreve:91}).
The Engelbert--Schmidt conditions are reasonable weak assumptions:
any locally bounded Borel function $\mu$
and locally bounded away from~$0$ Borel function $\sigma$ on $J$ 
satisfy \eqref{eq:set2} and~\eqref{eq:set3}.

Finally, the reason for considering an arbitrary interval $J\subseteq\bbR$
as a state space, and not just $\bbR$ itself, is that there are natural examples,
where the Engelbert--Schmidt conditions hold only on a subset of~$\bbR$.
Consider for example geometric Brownian motion
\begin{equation}
\label{eq:set3a}
\dd Y_t=aY_t\,\dd t+bY_t\,\dd W_t,\quad Y_0=x_0>0
\end{equation}
($a,b\in\bbR$, $b\ne0$),
with its natural state space $J=(0,\infty)$.
If we were to take  $J=\bbR$ here, both \eqref{eq:set2} and~\eqref{eq:set3} would
be violated.
Even though we can replace the diffusion coefficient $\sigma(y)=by$ in~\eqref{eq:set3a}
by
$\sigma(y)=by+I_{\{0\}}(y)$, which does not affect solutions of~\eqref{eq:set3a}
but fixes the problem with~\eqref{eq:set2}, the issue with~\eqref{eq:set3}
cannot be resolved in this way. On the other hand, any state space $J=(l,r)$
with $0\le l<x_0<r\le\infty$ is a possible choice when working with SDE~\eqref{eq:set3a};
however, if $l>0$ or $r<\infty$, the convention above
implies we have a stopped geometric Brownian motion.

\newpar{}
\label{par:set2}
Now we state some well-known results about the behaviour of one-dimensional diffusions
with the coefficients satisfying the Engelbert--Schmidt conditions
that will be extensively used in the sequel.
Let us also note that these results do not hold beyond the Engelbert--Schmidt conditions.

Let $s$ denote the scale function of $Y$ and $\rho$ the derivative of~$s$, i.e.
\begin{align}
\label{eq:set4}
\rho(x)&=\exp\left\{-\int_c^x \frac{2\mu}{\sigma^2}(y)\,\dd y\right\},\quad x\in J,\\
\label{eq:set5}
s(x)&=\int_c^x \rho(y)\,\dd y,\quad x\in\ol J,
\end{align}
for some $c\in J$. In particular, $s$ is an increasing $C^1$-function $J\to\bbR$
with a strictly positive absolutely continuous derivative,
while $s(r)$ (resp.~$s(l)$) may take value $\infty$ (resp.~$-\infty$).

For $a\in\ol J$ let us define the stopping time
\begin{equation}
\label{eq:set6}
\tau^Y_a=\inf\{t\in[0,\infty)\colon Y_t=a\}\qquad(\inf\emptyset:=\infty).
\end{equation}

\begin{proposition}
\label{prop:set1}
For any $a\in J$ we have $\PP(\tau^Y_a<\infty)>0$.
\end{proposition}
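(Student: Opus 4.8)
The plan is to pass to the natural scale of $Y$ and thereby reduce the claim to the elementary fact that Brownian motion reaches interior levels with positive probability; the only genuine work is to guarantee that this happens at a \emph{finite} time of $Y$.

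It suffices to treat $a>x_0$, the case $a<x_0$ being symmetric and $a=x_0$ trivial (then $\tau^Y_a=0$). First I would fix $b\in(l,x_0)$ and look at the exit of $Y$ from $(b,a)$. Applying the It\^o--Tanaka formula to $X:=s(Y)$ and using $\rho'=-(2\mu/\sigma^2)\rho$ from~\eqref{eq:set4}, the drift cancels and $\dd X_t=(\rho\sigma)(Y_t)\,\dd W_t$; thus $X$ is a continuous local martingale on natural scale, valued in $(s(l),s(r))$, with $s(b)<s(x_0)<s(a)$ all finite. Since $s$ is a strictly increasing homeomorphism of $J$, the event that $Y$ reaches $a$ coincides with the event that $X$ reaches $s(a)$.

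Next I would realise $X$ as a time-changed Brownian motion in the Engelbert--Schmidt way. Setting $\wt\sigma(x):=(\rho\sigma)(s^{-1}(x))$ on $(s(l),s(r))$, the change of variables $x=s(y)$ turns~\eqref{eq:set3} into $1/\wt\sigma^2\in L^1_\loc((s(l),s(r)))$, and one constructs a solution from a Brownian motion $\beta$ started at $s(x_0)$ by means of the additive functional $A_u=\int_0^u\wt\sigma^{-2}(\beta_v)\,\dd v$, its inverse $T$, and $X=\beta\circ T$. As~\eqref{eq:set2}--\eqref{eq:set3} give uniqueness in law, probabilities of the form $\PP(\tau^Y_a<\infty)$ coincide in this model and for the original $Y$. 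On the event $E$ that $\beta$ reaches $s(a)$ before $s(b)$, which has probability $(s(x_0)-s(b))/(s(a)-s(b))>0$ by optional stopping applied to $\beta$, let $u^*$ be the first time $\beta$ hits $s(a)$; then $\beta$ stays in $[s(b),s(a)]$ throughout $[0,u^*]$.

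The crux is then to check that the clock does not explode before $u^*$, i.e.\ that $A_{u^*}<\infty$ a.s.\ on $E$. By the occupation times formula,
\[
A_{u^*}=\int_{s(b)}^{s(a)}\frac{1}{\wt\sigma^2(y)}\,L^{u^*}_y(\beta)\,\dd y ,
\]
and since the local time $y\mapsto L^{u^*}_y(\beta)$ is a.s.\ bounded while $1/\wt\sigma^2$ is integrable on the compact $[s(b),s(a)]$, the integral is finite. Hence on $E$ the time change attains the finite value $t^*:=A_{u^*}$, and the strict increase of $A$ at $u^*$ (from $\wt\sigma>0$) gives $T_{t^*}=u^*$, whence $X_{t^*}=s(a)$ and $Y_{t^*}=a$. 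Thus $\tau^Y_a\le t^*<\infty$ on $E$ and $\PP(\tau^Y_a<\infty)\ge\PP(E)>0$. I expect this finiteness of the clock to be the main obstacle, and it is precisely the local integrability of $1/\sigma^2$ on compacts in~\eqref{eq:set3} that makes it go through.
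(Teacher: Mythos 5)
Your proof is correct and is essentially the argument the paper intends: Proposition~\ref{prop:set1} is stated without proof and attributed to the Engelbert--Schmidt construction of solutions, and your reduction to natural scale, realisation of $s(Y)$ as a time-changed Brownian motion, and verification via the occupation times formula that the clock $A$ is finite up to the exit of a compact interval is precisely that construction written out. No gaps.
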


Even though it is assumed 
in Proposition~\ref{prop:set1}
that 
$a\in J$,
we stress that $\tau^Y_a$ is defined for any $a\in\ol J$,
which will be needed in Remark~\ref{rem:set1} below.

Further let us consider the sets
\begin{align*}
A&=\left\{\zeta=\infty,\;\limsup_{t\to\infty}Y_t=r,\;\liminf_{t\to\infty}Y_t=l\right\},\\
B_r&=\left\{\zeta=\infty,\;\lim_{t\to\infty}Y_t=r\right\},\\
C_r&=\left\{\zeta<\infty,\;\lim_{t\uparrow\zeta}Y_t=r\right\},\\
B_l&=\left\{\zeta=\infty,\;\lim_{t\to\infty}Y_t=l\right\},\\
C_l&=\left\{\zeta<\infty,\;\lim_{t\uparrow\zeta}Y_t=l\right\}.
\end{align*}

\begin{proposition}
\label{prop:set2}
Either $\PP(A)=1$ or $\PP(B_r\cup B_l\cup C_r\cup C_l)=1$.
\end{proposition}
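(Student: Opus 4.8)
The plan is to pass to the natural scale and to read off the boundary behaviour of $Y$ from the two numbers $s(l+):=\lim_{x\downarrow l}s(x)\in[-\infty,\infty)$ and $s(r-):=\lim_{x\uparrow r}s(x)\in(-\infty,\infty]$. Since $s$ solves the ODE that removes the drift, It\^o's formula shows that $S_t:=s(Y_t)$ is, up to $\zeta$, a continuous local martingale; and because $s\colon\ol J\to[s(l+),s(r-)]$ is a strictly increasing homeomorphism, the events describing $Y$ translate verbatim into events for $S$: one has $\{Y_t\to r\}=\{S_t\to s(r-)\}$, $\{Y_t\to l\}=\{S_t\to s(l+)\}$, and $A$ corresponds to $\{\limsup_t S_t=s(r-),\ \liminf_t S_t=s(l+)\}$. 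Thus it suffices to decide when $S$ oscillates between its two scale-boundaries and when it converges to one of them.

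The computational heart is the two-sided exit estimate. Fix $l<a<x_0<b<r$ and let $T_{a,b}=\tau^Y_a\wedge\tau^Y_b$. The stopped process $S_{\cdot\wedge T_{a,b}}$ takes values in the bounded interval $[s(a),s(b)]$, so it is a genuine bounded martingale; provided $T_{a,b}<\infty$ a.s., optional stopping gives
\[
\PP(\tau^Y_a<\tau^Y_b)=\frac{s(b)-s(x_0)}{s(b)-s(a)},\qquad \PP(\tau^Y_b<\tau^Y_a)=\frac{s(x_0)-s(a)}{s(b)-s(a)}.
\]
Monotonicity of these events in $a$ and in $b$, together with the absorbing convention (after reaching one endpoint the process cannot return to the opposite level), lets me send $a\downarrow l$ and $b\uparrow r$ to obtain
\[
p_r:=\PP\Big(\sup_{t}Y_t=r\Big)=\frac{s(x_0)-s(l+)}{s(r-)-s(l+)},\qquad p_l:=\PP\Big(\inf_{t}Y_t=l\Big)=\frac{s(r-)-s(x_0)}{s(r-)-s(l+)}.
\]
With the natural conventions these read off cleanly: $p_r=1$ iff $s(l+)=-\infty$, and $p_l=1$ iff $s(r-)=+\infty$.

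The dichotomy now falls out. If $s(l+)=-\infty$ and $s(r-)=+\infty$, then $p_r=p_l=1$; since then every level is hit with probability one from every starting point, the strong Markov property upgrades $\{\sup Y=r\}$ and $\{\inf Y=l\}$ to $\{\limsup_t Y_t=r\}$ and $\{\liminf_t Y_t=l\}$ (each level is revisited at arbitrarily large times), and oscillation precludes exit, so $\zeta=\infty$ and $\PP(A)=1$. If at least one scale-boundary is finite, then $A\subseteq\{\sup Y=r\}\cap\{\inf Y=l\}$ is null: when exactly one is finite, say $s(r-)<\infty$ and $s(l+)=-\infty$, one has $p_l=0$, so this intersection is contained in $\{\inf_tY_t=l\}$ and is null (the case $s(l+)>-\infty$, $s(r-)=+\infty$ is symmetric); when both are finite, $p_r+p_l=1$ while a.s.\ the process leaves every compact subinterval, so $\PP(\{\sup Y=r\}\cup\{\inf Y=l\})=1$ and inclusion--exclusion forces $\PP(\{\sup Y=r\}\cap\{\inf Y=l\})=0$. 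To upgrade $\PP(A)=0$ to $\PP(B_r\cup B_l\cup C_r\cup C_l)=1$ I use the stopped martingales $s(Y_{\cdot\wedge\tau^Y_a})$, which are bounded and hence converge a.s.; letting $a\downarrow l$ (or $b\uparrow r$) shows that in every transient case $S$, and therefore $Y$, converges a.s., and the no-interior-accumulation fact identifies the limit as a boundary point.

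The two places where real work is hidden, and which I expect to be the main obstacle, are the inputs ``$T_{a,b}<\infty$ a.s.'' and ``$Y$ leaves every compact subinterval and cannot converge to an interior point'', all under the sole assumption of the Engelbert--Schmidt conditions, where $\sigma^2$ need not be bounded away from $0$ on $[a,b]$. The clean way to secure them is through the speed measure $m(\dd y)=\tfrac{2}{\rho(y)\sigma^2(y)}\,\dd y$, which under~\eqref{eq:set2}--\eqref{eq:set3} is a Radon measure on $J$ (finite on compacts, since $\tfrac1{\sigma^2}\in L^1_\loc(J)$ and $\rho$ is continuous and strictly positive): the expected exit time of $(a,b)$ is a finite integral of a bounded Green kernel against $m$, whence $T_{a,b}<\infty$ a.s., and the strong Markov property then rules out both confinement to a compact and convergence to an interior point. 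Proposition~\ref{prop:set1} supplies the companion positivity $\PP(\tau^Y_a<\infty)>0$ that makes the renewal step run, so that the remaining difficulty is genuinely this measure-theoretic regularity rather than the martingale bookkeeping.
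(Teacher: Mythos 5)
Your proof is correct, but note that the paper itself offers no proof of Proposition~\ref{prop:set2}: it declares Propositions~\ref{prop:set1}--\ref{prop:set4} well known and defers to the Engelbert--Schmidt construction (e.g.\ \cite{EngelbertSchmidt:91} or \cite[Ch.~5.5]{KaratzasShreve:91}). So you are supplying an argument where the authors supply a citation, and your route --- natural scale, two-sided exit probabilities via optional stopping of the bounded martingale $s(Y_{\cdot\wedge T_{a,b}})$, then a martingale-convergence dichotomy governed by whether $s(l)=-\infty$ and $s(r)=\infty$ --- is the classical one-dimensional diffusion argument rather than the time-change construction used in those references. The two genuinely nontrivial inputs are exactly the ones you isolate: $\PP(T_{a,b}<\infty)=1$ and the impossibility of convergence to an interior point. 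Your speed-measure route to the first is sound, though you only need a.s.\ finiteness rather than a finite expectation, and the cheapest source of it is precisely the Engelbert--Schmidt representation $Y=s^{-1}(B_{\Gamma})$, which gives $T_{a,b}=\int_{s(a)}^{s(b)}L^y_{\tau}(B)\,\wt m(\dd y)$ with $\wt m$ finite on compacts by \eqref{eq:set2}--\eqref{eq:set3}; in that sense your proof repackages, rather than avoids, the construction the paper cites. Two small points of care that you handle implicitly but should make explicit: the identities for $p_r,p_l$ are iterated limits taken in a fixed order (the formal quotient is $\infty/\infty$ when both boundaries are inaccessible in scale), and the convergence upgrade in the case $s(l)=-\infty$, $s(r)<\infty$ uses that $\bigcap_{a}\{\tau^Y_a<\zeta\}$ is null, so that a.s.\ $s(Y)$ eventually coincides with one of the bounded stopped martingales $s(Y_{\cdot\wedge\tau^Y_a})$. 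What your approach buys is a self-contained argument that delivers Propositions~\ref{prop:set3} and~\ref{prop:set4} along the way; what the paper's citation buys is brevity.
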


\begin{proposition}
\label{prop:set3}
(i) $\PP(B_r\cup C_r)=0$ holds if and only if $s(r)=\infty$.

(ii) $\PP(B_l\cup C_l)=0$ holds if and only if $s(l)=-\infty$.
\end{proposition}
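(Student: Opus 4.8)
The plan is to pass to the natural scale and reduce everything to the classical two-sided exit formula for the local martingale $X:=s(Y)$. First I would record that, since $s$ is $C^1$ with strictly positive absolutely continuous derivative and satisfies the scale equation $\tfrac12\sigma^2 s''+\mu s'=0$ a.e., the generalised It\^o formula kills the drift, so that $X_t=s(Y_t)$ is a continuous local martingale on $[0,\zeta)$. Because $s$ is a strictly increasing homeomorphism of $J$ onto $(s(l),s(r))$, the event $B_r\cup C_r=\{Y_t\to r\}$ coincides with $\{X_t\to s(r)\}$, and the whole proposition reduces to deciding whether $X$ can approach its upper boundary $s(r)$, which is finite or $+\infty$ exactly according to the value of $s(r)$.

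Next I would establish the exit formula. For $l<a<x_0<b<r$ the stopped process $X^{\tau^Y_a\wedge\tau^Y_b}$ stays in $[s(a),s(b)]$ and is therefore a bounded, hence true, martingale; using the standard fact that the non-degenerate diffusion $Y$ leaves the compact interval $[a,b]$ in an a.s.\ finite time, optional stopping yields
\[
\PP(\tau^Y_b<\tau^Y_a)=\frac{s(x_0)-s(a)}{s(b)-s(a)},\qquad
\PP(\tau^Y_a<\tau^Y_b)=\frac{s(b)-s(x_0)}{s(b)-s(a)}.
\]

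For the implication $s(r)=\infty\Rightarrow\PP(B_r\cup C_r)=0$, I would fix $a$ and let $b\uparrow r$. Since $s(b)\to\infty$, the second formula gives $\PP(\tau^Y_a<\tau^Y_b)\to1$, so almost surely $Y$ visits $a$ before it can reach $r$. Applying this along a sequence $a_n\downarrow l$ and using the monotonicity of the hitting times $\tau^Y_{a_n}$, one sees that on $\{Y_t\to r\}$ the process would attain values tending to $l$, either at times increasing to $\zeta$ or at a finite accumulation time; both scenarios contradict $Y_t\to r$, whence $\PP(B_r\cup C_r)=0$.

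For the converse $s(r)<\infty\Rightarrow\PP(B_r\cup C_r)>0$, I would again fix $a$ and let $b\uparrow r$ in the first formula, obtaining $\PP(\tau^Y_b<\tau^Y_a)\to(s(x_0)-s(a))/(s(r)-s(a))>0$. The events $\{\tau^Y_b<\tau^Y_a\}$ decrease as $b\uparrow r$ to an event $E$ of this positive probability, and on $E$ the path exceeds every level $b<r$ while never touching $a$, so $\sup_t Y_t=r$ and $\inf_t Y_t\ge a>l$. The main obstacle is precisely this last translation from the hitting-probability asymptotics to the pathwise event: I must upgrade $\sup_t Y_t=r$ to genuine convergence $Y_t\to r$. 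The only competing behaviour is oscillation, but $E$ meets neither $A$ (on $E$ one has $\liminf Y\ge a>l$) nor $B_l\cup C_l$, so Proposition~\ref{prop:set2} forces $E\subseteq B_r\cup C_r$ and hence $\PP(B_r\cup C_r)\ge\PP(E)>0$. Part~(ii) follows by the symmetric argument at the left endpoint, with the roles of $s(r)$ and $s(l)$ interchanged.
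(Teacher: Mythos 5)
The paper does not actually prove Proposition~\ref{prop:set3}: it declares Propositions~\ref{prop:set1}--\ref{prop:set4} to be well known and points to the Engelbert--Schmidt construction and to \cite{EngelbertSchmidt:91}, \cite[Ch.~5.5]{KaratzasShreve:91}, \cite[Sec.~1.5]{EngelbertSchmidt:89}. Your argument is the standard textbook route behind those citations --- pass to natural scale, derive the two-sided exit formula
$\PP(\tau^Y_b<\tau^Y_a)=\bigl(s(x_0)-s(a)\bigr)/\bigl(s(b)-s(a)\bigr)$
by optional stopping, and take $b\uparrow r$ --- and it is correct. Two points deserve comment. First, the one substantive fact you import without proof is that $\tau^Y_a\wedge\tau^Y_b<\infty$ a.s.; this is exactly where the Engelbert--Schmidt conditions \eqref{eq:set2}--\eqref{eq:set3} are genuinely needed (it follows from the time-change construction, since $1/\wt\sigma^{\,2}\in L^1_\loc$ makes the inverse time change finite up to the Brownian exit time from $[s(a),s(b)]$), and without it optional stopping does not yield the clean exit formula; a sentence acknowledging this dependence would make the proof honest about where the hypotheses enter. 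Second, in the forward direction the ``finite accumulation time'' alternative ($T:=\lim_n\tau^Y_{a_n}<\zeta$ with $Y_T=l$) contradicts the $J$-valuedness of $Y$ on $[0,\zeta)$, i.e.\ the definition of $\zeta$, rather than the convergence $Y_t\to r$ itself; this is cosmetic but worth stating precisely. Your use of Proposition~\ref{prop:set2} to upgrade $\sup_{t<\zeta}Y_t=r$ together with $\inf_{t<\zeta}Y_t>l$ on the event $E$ to genuine convergence $Y_t\to r$ is exactly the right way to close the converse direction, and is the step most often glossed over in informal accounts.
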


In particular, we get that $\PP(A)=1$ holds if and only if $s(r)=\infty$, $s(l)=-\infty$.

\begin{proposition}
\label{prop:set4}
Assume that $s(r)<\infty$.
Then either $\PP(B_r)>0$, $\PP(C_r)=0$ or $\PP(B_r)=0$, $\PP(C_r)>0$.
Furthermore, we have
$$
\PP\left(\lim_{t\uparrow\zeta}Y_t=r,\;\;Y_t>a\;\forall t\in[0,\zeta)\right)>0
$$
for any $a<x_0$.
\end{proposition}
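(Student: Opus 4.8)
The plan is to work in natural scale. Since $s$ is a strictly increasing $C^1$-function on $J$ with $s'=\rho>0$, and the drift cancels, $X_t:=s(Y_t)$ is a continuous local martingale with $\dd X_t=(\rho\sigma)(Y_t)\,\dd W_t$ and $\la X\ra_t=\int_0^t(\rho\sigma)^2(Y_u)\,\dd u$; the events $B_r,C_r$ become the statement that $X_t\to s(r)$ as $t\uparrow\zeta$, where $s(r)<\infty$ by assumption. First I would record from Propositions~\ref{prop:set2} and~\ref{prop:set3} that $\PP(A)=0$: since $s(r)<\infty$ gives $\PP(B_r\cup C_r)>0$ and $A$ is disjoint from $B_r\cup C_r$, the alternative $\PP(A)=1$ is impossible, so $\PP(A)=0$ and $Y$ converges $\PP$-a.s. to $l$ or to $r$.

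For the ``furthermore'' statement I would use the scale-function exit probabilities. Fix $a\in(l,x_0)$ (the cases $a\le l$ are trivial: then $Y_t>a$ automatically and the claim reduces to $\PP(B_r\cup C_r)>0$, which holds by Proposition~\ref{prop:set3}). For $b\in(x_0,r)$, optional stopping applied to the bounded martingale $X$ stopped at $\tau^Y_a\wedge\tau^Y_b$ (the diffusion leaves any compact subinterval in finite time $\PP$-a.s.) gives $\PP(\tau^Y_b<\tau^Y_a)=\tfrac{s(x_0)-s(a)}{s(b)-s(a)}$. The events $\{\tau^Y_b<\tau^Y_a\}$ decrease as $b\uparrow r$, so on $E:=\bigcap_{b\uparrow r}\{\tau^Y_b<\tau^Y_a\}$ the path hits every level $b<r$ before $\tau^Y_a$; as a continuous path on a compact interval is bounded and attains its supremum, $\tau^Y_a<\infty$ is impossible on $E$, whence $E\subseteq\{\tau^Y_a=\infty\}$. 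Since $s(b)\uparrow s(r)<\infty$, continuity from above yields $\PP(\tau^Y_a=\infty)\ge\PP(E)=\tfrac{s(x_0)-s(a)}{s(r)-s(a)}>0$. Finally, on $\{\tau^Y_a=\infty\}$ the path stays strictly above $a$ (by continuity and $Y_0=x_0>a$), hence is bounded away from $l$, so by the first paragraph it converges to $r$; conversely $\{\lim_{t\uparrow\zeta}Y_t=r,\ Y_t>a\ \forall t\in[0,\zeta)\}\subseteq\{\tau^Y_a=\infty\}$. Thus the two events coincide up to a null set and the desired probability equals $\PP(\tau^Y_a=\infty)>0$.

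For the dichotomy I would reduce to Brownian motion by a time change. By the Dambis--Dubins--Schwarz theorem there is a Brownian motion $\beta$ started at $s(x_0)$ with $X_t=\beta_{\la X\ra_t}$, and $\la X\ra$ is strictly increasing on $[0,\zeta)$ because $(\rho\sigma)^2>0$ on $J$. On $B_r\cup C_r$ we have $\beta_{\la X\ra_t}\to s(r)$; since Brownian motion does not converge at $+\infty$, this forces $\la X\ra_{\zeta-}<\infty$ $\PP$-a.s. on $B_r\cup C_r$, and there $\la X\ra_{\zeta-}$ equals the first hitting time $H$ of $s(r)$ by $\beta$ (as $\beta_u<s(r)$ for $u<\la X\ra_{\zeta-}$). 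Changing variables in $\zeta=\int_0^\zeta\dd u$ gives, on $B_r\cup C_r$,
\[
\zeta=\int_0^{H}q(\beta_v)\,\dd v,\qquad q(x)=\frac1{(\rho\sigma)^2(s^{-1}(x))}.
\]
Hence $C_r=(B_r\cup C_r)\cap\{\int_0^{H}q(\beta_v)\,\dd v<\infty\}$ and $B_r=(B_r\cup C_r)\cap\{\int_0^{H}q(\beta_v)\,\dd v=\infty\}$, so the dichotomy is exactly the assertion that the Brownian functional $\int_0^{H}q(\beta_v)\,\dd v$ is $\PP$-a.s. finite or $\PP$-a.s. infinite.

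The hard part will be this last zero-one law, which is the real content of the dichotomy. One clean way to settle it is Feller's boundary classification: the finiteness of $\int_0^H q(\beta_v)\,\dd v$ is governed by the deterministic condition $\int^{s(r)}(s(r)-x)\,q(x)\,\dd x<\infty$, so either $\PP$-almost every path or $\PP$-almost no path reaches $r$ in finite time, forcing exactly one of $\PP(C_r),\PP(B_r)$ to vanish while the other is positive (the latter since $\PP(B_r\cup C_r)>0$). This statement is classical, and it is precisely the kind of Brownian zero-one dichotomy for integral functionals that the paper establishes in general later; the reduction above is therefore the natural route.
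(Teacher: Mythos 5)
The paper does not actually prove Proposition~\ref{prop:set4}: it is listed among Propositions~\ref{prop:set1}--\ref{prop:set4} as ``well-known'' and attributed to the Engelbert--Schmidt construction (\cite{EngelbertSchmidt:91}, \cite[Ch.~5.5]{KaratzasShreve:91}, \cite[Sec.~1.5]{EngelbertSchmidt:89}). Your argument is, in essence, a self-contained rendering of exactly that construction: pass to natural scale, read off the ruin probabilities by optional stopping, and recover $\zeta$ as the time-changed Brownian functional $\int_0^{H}q(\beta_v)\,\dd v$ with $q=1/\wt\sigma^2$. The ``furthermore'' part is done cleanly and correctly: the identification of $\{\tau^Y_a=\infty\}$ with $\{\lim_{t\uparrow\zeta}Y_t=r,\ Y_t>a\ \forall t\}$ up to null sets, and the lower bound $\PP(E)=\frac{s(x_0)-s(a)}{s(r)-s(a)}>0$ via continuity from above, are exactly right and use $s(r)<\infty$ where they should.

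Two places are asserted rather than proved and deserve attention. First, the optional-stopping step needs $\tau^Y_a\wedge\tau^Y_b<\infty$ a.s.; under the Engelbert--Schmidt conditions $(\rho\sigma)^2$ need not be bounded away from $0$ on $[a,b]$, so this is not automatic from boundedness of the martingale. It does follow from the time-change identity $t=\int_0^{\la\wt Y,\wt Y\ra_t}q(\beta_v)\,\dd v$ together with $q\in L^1_\loc(s(J))$ and the compact support of Brownian local time, but that one-line argument should be included. Second, the dichotomy is ultimately delegated to ``Feller's boundary classification,'' i.e.\ to the zero-one law for $\int_0^{\tau^\beta_{s(r)}}q(\beta_v)\,\dd v$; within this paper the honest way to close that step is Lemma~\ref{lem:bc1} (whose two proofs in Sections~\ref{sec:fp} and~\ref{sec:sp} do not use Proposition~\ref{prop:set4}, so there is no circularity). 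Note, however, that Lemma~\ref{lem:bc1} is stated for $f\in L^1_\loc((-\infty,r))$ and for the unrestricted Brownian path, whereas your $q$ lives on $(s(l),s(r))$ and the integral is only considered on the event $\{\tau^\beta_{s(r)}<\tau^\beta_{s(l)}\}$ when $s(l)>-\infty$. You need a short localisation: replace $q$ by $q\,I_{[c,s(r))}$ for some $c\in(s(l),s(r))$, apply Lemma~\ref{lem:bc1}, and observe that on $\{\tau^\beta_{s(r)}<\tau^\beta_{s(l)}\}$ the discarded piece $\int q(x)I_{(s(l),c)}(x)L^x_{\tau^\beta_{s(r)}}(\beta)\,\dd x$ is a.s.\ finite because the local time then has compact support inside $(s(l),s(r)]$ and $q\in L^1_\loc(s(J))$. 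With these two repairs the proof is complete.
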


Propositions~\ref{prop:set1}--\ref{prop:set4} are well-known and 
follow from the Engelbert--Schmidt construction of solutions
(see~e.g.~\cite{EngelbertSchmidt:91} or~\cite[Ch.~5.5]{KaratzasShreve:91})
or can be deduced from the results in~\cite[Sec.~1.5]{EngelbertSchmidt:89}.

\begin{proposition}[Feller's test for explosions]
\label{prop:set5}
We have $\PP(B_r)=0$, $\PP(C_r)>0$ if and only if
$$
s(r)<\infty\quad\text{and}\quad\frac{s(r)-s}{\rho\sigma^2}\in L^1_\loc(r-).
$$
\end{proposition}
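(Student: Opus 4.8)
The plan is to pass to natural scale and a time change, thereby rewriting the exit time $\zeta$ (on the event that $Y$ exits at $r$) as an additive functional of a Brownian motion run up to the first hitting time of a level, and then to read off the finiteness criterion from the first Ray-Knight theorem. First I would dispose of the case $s(r)=\infty$: by Proposition~\ref{prop:set3}(i) this gives $\PP(B_r\cup C_r)=0$, so $\PP(C_r)=0$ and both sides of the claimed equivalence fail. Hence it suffices to work under $s(r)<\infty$, where Proposition~\ref{prop:set4} supplies the dichotomy between $\{\PP(B_r)>0,\,\PP(C_r)=0\}$ and $\{\PP(B_r)=0,\,\PP(C_r)>0\}$; the assertion then reduces to showing that, when $s(r)<\infty$, one has $\PP(C_r)>0$ if and only if $(s(r)-s)/(\rho\sigma^2)\in L^1_\loc(r-)$.

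Next I would set $Z_t=s(Y_t)$. Since $s$ solves $\frac12\sigma^2s''+\mu s'=0$, Itô's formula yields $\dd Z_t=(\rho\sigma)(Y_t)\,\dd W_t$, so $Z$ is a continuous local martingale on $(s(l),s(r))$ with $\dd\la Z\ra_t=(\rho^2\sigma^2)(Y_t)\,\dd t$. By the Dambis--Dubins--Schwarz theorem there is a Brownian motion $B$, started at $s(x_0)$, with $Z_t=B_{\la Z\ra_t}$. On the event that $Y$ exits at $r$ we have $Z_t\uparrow s(r)=:\theta$ as $t\uparrow\zeta$, so $B$ attains $\theta$ and $\la Z\ra_\zeta=H_\theta$, the first hitting time of $\theta$ by $B$. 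Inverting the time change and applying the occupation times formula, I would obtain, on this event,
$$
\zeta=\int_0^{H_\theta}\frac{\dd u}{(\rho^2\sigma^2)(s^{-1}(B_u))}
=\int_l^r L^{s(y)}_{H_\theta}\,\frac{\dd y}{(\rho\sigma^2)(y)},
$$
where $L^z_{H_\theta}$ is the local time of $B$ at level $z$ up to $H_\theta$ and the substitution $z=s(y)$, $\dd z=\rho(y)\,\dd y$ produces exactly the density in the statement. Thus $Y$ exits at $r$ in finite time precisely when this integral converges; since the coefficients are well behaved on compact subintervals of $J$, the only issue is convergence near $y=r$, i.e. near $z=\theta$.

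To analyse the behaviour near $\theta$ I would invoke the first Ray-Knight theorem \cite{RevuzYor:99}: the process $a\mapsto L^{\theta-a}_{H_\theta}$ is, for small $a\ge0$, a squared Bessel process of dimension two started from $0$, so that $\EE[L^{\theta-a}_{H_\theta}]=2a$ for $a$ small. Substituting $z=\theta-a=s(y)$ gives $\EE[L^{s(y)}_{H_\theta}]=2(s(r)-s(y))$ near $r$, whence
$$
\EE\Big[\int^r L^{s(y)}_{H_\theta}\,\frac{\dd y}{(\rho\sigma^2)(y)}\Big]
=2\int^r\frac{s(r)-s(y)}{(\rho\sigma^2)(y)}\,\dd y,
$$
so the deterministic condition $(s(r)-s)/(\rho\sigma^2)\in L^1_\loc(r-)$ is exactly finiteness of this expectation. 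The main obstacle, and the real content of the argument, is upgrading this expectation computation to the pathwise statement that the random integral is finite almost surely if and only if the deterministic integral is finite; this is a zero-one law for Brownian local time integrated in the space variable, which I would derive from the squared Bessel description together with a dichotomy argument, in the spirit of the key Lemma~\ref{lem:bc1}. Granting it, if the deterministic integral converges then $\zeta<\infty$ on the exit-at-$r$ event, which by Proposition~\ref{prop:set4} has positive probability, giving $\PP(C_r)>0$; conversely, if it diverges then $\zeta=\infty$ on that event, forcing $\PP(C_r)=0$ and hence $\PP(B_r)>0$ by the dichotomy.
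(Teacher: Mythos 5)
Your argument is correct, but it is worth noting that the paper does not prove Proposition~\ref{prop:set5} at all: it is quoted as a known result from \cite[Sec.~4.1]{ChernyEngelbert:05} (see also \cite[Ch.~5, Th.~5.29]{KaratzasShreve:91}), so your proposal supplies a proof where the paper supplies a citation. What you have written is, in effect, the observation that Feller's test is the special case $f\equiv 1$ of Theorem~\ref{th:set3}: your chain (pass to $Z=s(Y)$, Dambis--Dubins--Schwarz, identify $\la Z\ra_\zeta=\tau^B_{s(r)}$ on the exit-at-$r$ event, write $\zeta=\int_0^{\tau^B_{s(r)}}\wt\sigma^{-2}(B_u)\,\dd u$ and apply the occupation times formula) is exactly the paper's \eqref{eq:pt1}--\eqref{eq:pt5} with $\wt f\equiv1$, and the change of variables $z=s(y)$ turns the criterion $(s(r)-z)\wt\sigma^{-2}(z)\in L^1_\loc(s(r)-)$ into $(s(r)-s)/(\rho\sigma^2)\in L^1_\loc(r-)$. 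Two remarks. First, the ``zero-one law'' step that you describe as the real content and propose to derive ``in the spirit of Lemma~\ref{lem:bc1}'' requires no new dichotomy argument: it \emph{is} Lemma~\ref{lem:bc1} applied to $g=1/\wt\sigma^2$, which satisfies the hypothesis $g\in L^1_\loc(s(J))$ because $1/\sigma^2\in L^1_\loc(J)$ is part of the Engelbert--Schmidt conditions; your expectation computation via Ray--Knight then reappears inside the paper's second proof of that lemma (the Fubini step for part (A) and the second-moment dichotomy for part (B)). Second, one should check that invoking Theorem~\ref{th:set3} or Lemma~\ref{lem:bc1} here is not circular; it is not, since the paper's proofs of those results use Propositions~\ref{prop:set1}--\ref{prop:set4} but never Proposition~\ref{prop:set5} (the reference to it in the proof of Theorem~\ref{th:set3} is parenthetical). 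So your route is sound and self-contained modulo the machinery the paper develops anyway; its payoff is that Feller's test need not be imported as a black box, at the mild cost of having to order the logic so that Lemma~\ref{lem:bc1} comes first.
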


Clearly, Propositions \ref{prop:set4} and~\ref{prop:set5},
which contain statements about the behaviour of one-dimensional diffusions
at the endpoint~$r$, have their analogues for the behaviour at~$l$.
Feller's test for explosions in this form is taken from \cite[Sec.~4.1]{ChernyEngelbert:05}.
For a different (but equivalent) form see e.g. \cite[Ch.~5, Th.~5.29]{KaratzasShreve:91}.

\newpar{}
\label{par:set3}
In this paper we study convergence of the integral functional
\begin{equation}
\label{eq:set7}
\int_0^t f(Y_u)\,\dd u,\quad t\in[0,\zeta],
\end{equation}
where $f\colon J\to[0,\infty]$ is a nonnegative Borel function.
In this subsection we reduce the study of convergence of~\eqref{eq:set7}
in general to that of convergence of the integral
\begin{equation}
\label{eq:set8}
\int_0^\zeta f(Y_u)\,\dd u
\end{equation}
for a nonnegative Borel function $f\colon J\to[0,\infty]$
such that $\frac f{\sigma^2}\in L^1_\loc(J)$.
In the next subsection we formulate the answer to the latter problem.

Let us consider the set
$$
D=\left\{x\in J\colon \frac f{\sigma^2}\notin L^1_\loc(x)\right\}
$$
and note that $D$ is a closed subset in~$J$.
Let us further define the stopping time
$$
\eta_D=\zeta\wedge\inf\{t\in[0,\infty)\colon Y_t\in D\}\qquad(\inf\emptyset:=\infty).
$$

\begin{theorem}
\label{th:set1}
$\PP$-a.s. we have:
\begin{align}
\label{eq:set9}
\int_0^t f(Y_u)\,\dd u&<\infty,\quad t\in[0,\eta_D),\\
\label{eq:set10}
\int_0^t f(Y_u)\,\dd u&=\infty,\quad t\in(\eta_D,\zeta].
\end{align}
\end{theorem}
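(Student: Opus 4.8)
The plan is to pass from $Y$ to Brownian motion and rewrite the functional as a Brownian occupation integral, after which both assertions reduce to the behaviour of Brownian local time. With $X_t=s(Y_t)$, the Itô--Tanaka formula (legitimate since $s\in C^1$ with absolutely continuous, strictly positive derivative $\rho$) together with the defining property of the scale function removes the drift and shows that $X$ is a continuous local martingale on $(s(l),s(r))$ with $\dd\la X\ra_t=(\rho\sigma)^2(Y_t)\,\dd t$. By Dambis--Dubins--Schwarz there is a Brownian motion $B$ with $X_t=B_{\la X\ra_t}$, and since passing to the scale function does not change the time variable, the substitution $v=\la X\ra_u$ gives, for $t<\zeta$,
\[
\int_0^t f(Y_u)\,\dd u=\int_0^{\la X\ra_t} g(B_v)\,\dd v,\qquad g:=\Bigl(\tfrac{f}{(\rho\sigma)^2}\Bigr)\circ s^{-1}.
\]
Because $\rho$ is continuous and strictly positive, hence locally bounded away from $0$ and $\infty$, the change of variables $y=s(x)$, $\dd y=\rho(x)\,\dd x$, shows $f/\sigma^2\in L^1_\loc(x)$ iff $g\in L^1_\loc(s(x))$. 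Thus $s(D)=\{y\colon g\notin L^1_\loc(y)\}$ is closed in $s(J)$, and the strictly increasing, continuous time change carries $\eta_D$ to $\theta:=\inf\{v\colon B_v\in s(D)\}$.

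For the finiteness statement~\eqref{eq:set9} I would apply the occupation-time formula for $B$, namely $\int_0^{\la X\ra_t}g(B_v)\,\dd v=\int_\bbR g(a)\,L^a_{\la X\ra_t}(B)\,\dd a$, where $L^\cdot_\cdot(B)$ is Brownian local time. For $t<\eta_D$ one has $\theta>\la X\ra_t$, so the compact range $B([0,\la X\ra_t])=s(Y([0,t]))$ avoids $s(D)$; covering it by finitely many intervals on each of which $g$ is integrable (possible since the range avoids the set where local integrability of $g$ fails) yields $\int g<\infty$ over the range, while joint continuity of $a\mapsto L^a_{\la X\ra_t}(B)$ makes it bounded. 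Hence the integral is finite, and by monotonicity in $t$ this holds for all $t\in[0,\eta_D)$ at once.

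The core of~\eqref{eq:set10} is the explosion of the Brownian occupation integral immediately after $\theta$. On $\{\eta_D<\zeta\}$ we have $\theta<\infty$ and, since $s(D)$ is closed and $B$ continuous, $b^\ast:=B_\theta\in s(D)$, i.e.\ $g\notin L^1_\loc(b^\ast)$. By the strong Markov property $\wt B_u:=B_{\theta+u}-b^\ast$ is a Brownian motion from $0$, and for $w>0$,
\[
\int_\theta^{\theta+w} g(B_v)\,\dd v=\int_\bbR g(b^\ast+a)\,L^a_w(\wt B)\,\dd a.
\]
On the a.s.\ event that $\wt B$ has a jointly continuous local time with $L^0_w(\wt B)>0$ for every $w>0$, continuity gives a random $\delta>0$ with $L^a_w(\wt B)\ge\tfrac12 L^0_w(\wt B)>0$ for $|a|\le\delta$; since $b^\ast\in s(D)$ forces $\int_{-\delta}^{\delta}g(b^\ast+a)\,\dd a=\infty$, the displayed integral is $+\infty$. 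Running $w$ through a sequence decreasing to $0$ and using monotonicity gives $\int_\theta^{\theta+w}g(B_v)\,\dd v=\infty$ for all $w>0$ simultaneously. Translating back through $\la X\ra$, every $t\in(\eta_D,\zeta]$ satisfies $\la X\ra_t>\theta$, whence $\int_0^t f(Y_u)\,\dd u=\infty$; on $\{\eta_D=\zeta\}$ the interval $(\eta_D,\zeta]$ is empty and~\eqref{eq:set10} is vacuous.

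The main obstacle is the explosion step: one must establish that Brownian motion started at a level $b^\ast$ where $g$ is not locally integrable accumulates an infinite occupation integral in every positive time interval. The argument above isolates this as the strict positivity and joint continuity of Brownian local time at the starting level, which is precisely the elementary Brownian fact that the later key lemma (and the Ray--Knight/Williams input behind it) is designed to supply and sharpen; I would state it separately and invoke it here. The only remaining delicacy is the measurability involved in passing this Brownian statement through the random hitting point $b^\ast$, but this is handled pathwise, since the sole $\omega$-dependence enters through the relation $b^\ast\in s(D)$, which holds throughout $\{\eta_D<\zeta\}$.
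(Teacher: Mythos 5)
Your proof is correct, but its key step takes a genuinely different route from the paper's. The paper never leaves the diffusion $Y$: it applies the occupation times formula directly to $Y$, writing $\int_0^t f(Y_u)\,\dd u=\int_J\frac{f}{\sigma^2}(y)\,L^y_t(Y)\,\dd y$ on $[0,\zeta)$, derives \eqref{eq:set9} from the compact support and boundedness of $y\mapsto L^y_t(Y)$, and derives \eqref{eq:set10} from a quoted result (Lemma~\ref{lem:set2}, i.e.\ Theorem~2.7 of Cherny--Engelbert) asserting that $L^a_t(Y)>0$ and $L^{a-}_t(Y)>0$ a.s.\ on $\{\tau^Y_a<t<\zeta\}$, applied at the nearest points $\alpha,\beta$ of $D$ on either side of $x_0$ (which lie in $D$ because $D$ is closed). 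You instead pass to natural scale and Dambis--Dubins--Schwarz time immediately, restart the Brownian motion at the hitting time of $s(D)$ via the strong Markov property, and use the elementary fact that $L^0_w(\wt B)>0$ for every $w>0$ a.s.\ together with joint continuity of Brownian local time. Your version is more self-contained --- it replaces the cited positivity result for diffusion local times by an elementary Brownian fact --- at the cost of deploying the scale-function/time-change machinery already at this stage (the paper postpones it to Section~\ref{sec:pt}) and of a few routine verifications you correctly flag (that the time change identifies $\eta_D$ with the Brownian hitting time of the closed set $s(D)$, and that the argument at the random level $b^\ast$ is purely pathwise). Both arguments ultimately rest on the same two ingredients: the occupation times formula and strict positivity of local time at a point where local integrability of $f/\sigma^2$ fails; the paper reads this positivity off the local time of $Y$ itself, while you manufacture it from Brownian motion.
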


\begin{remark}
\label{rem:set1}
After Theorem~\ref{th:set1} it remains only to study the convergence of the integral
$$
\int_0^{\eta_D} f(Y_u)\,\dd u.
$$
If $x_0\in D$, then $\eta_D\equiv0$, and the integral is clearly zero.
Let us assume that $x_0\notin D$ and set
\begin{align*}
\alpha&=\sup(l,x_0)\cap D\qquad(\sup\emptyset:=l),\\
\beta&=\inf(x_0,r)\cap D\qquad(\inf\emptyset:=r).
\end{align*}
It is easy to see that $\eta_D=\tau^Y_\alpha\wedge\tau^Y_\beta$.
Now if we consider $I:=(\alpha,\beta)$ as a new state space for~$Y$,
then $\tau^Y_\alpha\wedge\tau^Y_\beta$ will be the new exit time,
and we will have $\frac f{\sigma^2}\in L^1_\loc(I)$ by Lemma~\ref{lem:set1}.
This concludes the reduction of the study of the convergence of~\eqref{eq:set7}
to that of the convergence of~\eqref{eq:set8}.
\end{remark}

In order to prove Theorem~\ref{th:set1} we need some additional notation.
Since $Y$ is a continuous semimartingale up to the exit time $\zeta$,
one can define its local time $\{L^y_t(Y);y\in J,t\in[0,\zeta)\}$
on the stochastic interval $[0,\zeta)$ for any $y\in J$ in the usual way
(e.g. via the obvious generalization of~\cite[Ch.~VI, Th.~1.2]{RevuzYor:99}).
It follows from Theorem~VI.1.7 in~\cite{RevuzYor:99} that the random field
$\{L^y_t(Y);y\in J,t\in[0,\zeta)\}$ admits a modification such that the map
$(y,t)\mapsto L^y_t(Y)$ is a.s. continuous in $t$ and cadlag in~$y$.\footnote{Moreover,
it can be proved that for a diffusion $Y$ driven by~\eqref{eq:set1} under conditions
\eqref{eq:set2} and~\eqref{eq:set3}, any such modification is, in fact,
a.s. jointly continuous in $(t,y)$; see~\cite[Proposition~A.1]{MijatovicUrusov:10a}.}
As usual we always work with such a modification.
Let us further recall that a.s. on $\{t<\zeta\}$ the function
$y\mapsto L^y_t(Y)$ has a compact support in $J$ and hence
is bounded as a cadlag function with a compact support.

We will need the following result.

\begin{lemma}[Theorem~2.7 in~\cite{ChernyEngelbert:05}]
\label{lem:set2}
Let $a\in J$. Then
$$
L^a_t(Y)>0\quad\text{and}\quad L^{a-}_t(Y)>0\quad\PP\text{-a.s. on }\{\tau^Y_a<t<\zeta\}.
$$
\end{lemma}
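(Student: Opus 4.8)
The plan is to reduce the statement to the corresponding property of Brownian motion by first removing the drift via the scale function and then applying the Dambis--Dubins--Schwarz time change. Set $X_t=s(Y_t)$ for $t\in[0,\zeta)$, where $s$ is the scale function from~\eqref{eq:set5}. Since $s\in C^1$ with absolutely continuous derivative $\rho$ and $s$ is chosen precisely to annihilate the drift of $Y$, the It\^o--Tanaka formula shows that $X$ is a continuous local martingale on the stochastic interval $[0,\zeta)$ with $\dd\la X\ra_t=\rho^2(Y_t)\sigma^2(Y_t)\,\dd t$. Because $\rho>0$ and $\sigma\ne0$ on $J$ by~\eqref{eq:set2}, the quadratic variation $\la X\ra$ is a strictly increasing, absolutely continuous process on $[0,\zeta)$. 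Moreover, as $s$ is a strictly increasing $C^1$ diffeomorphism of $J$, the occupation-times formula yields the change-of-variables identity $L^{s(a)}_t(X)=\rho(a)\,L^a_t(Y)$, and the analogous identity for the left local times (using the continuity of $\rho$ at $a$), so positivity of local time is preserved under the transformation; note also $\tau^Y_a=\tau^X_{s(a)}$.

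First I would dispose of the left local time. For the continuous semimartingale $Y=M+V$ with $\dd V_s=\mu(Y_s)\,\dd s$, the right and left local times satisfy $L^a_t(Y)-L^{a-}_t(Y)=2\int_0^t\mathbf 1_{\{Y_s=a\}}\,\dd V_s=2\mu(a)\int_0^t\mathbf 1_{\{Y_s=a\}}\,\dd s$. By the occupation-times formula, $\sigma^2(a)\int_0^t\mathbf 1_{\{Y_s=a\}}\,\dd s=\int_{\{a\}}L^y_t(Y)\,\dd y=0$, and since $\sigma^2(a)>0$ the set $\{s\le t\colon Y_s=a\}$ is Lebesgue-null; hence $L^a_t(Y)=L^{a-}_t(Y)$ and it suffices to prove $L^a_t(Y)>0$ on $\{\tau^Y_a<t<\zeta\}$.

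By Dambis--Dubins--Schwarz there is a Brownian motion $\beta$ (after enlarging the space if $\lim_{u\uparrow\zeta}\la X\ra_u<\infty$) with $X_t=\beta_{\la X\ra_t}$ and $L^{b}_t(X)=L^{b}_{\la X\ra_t}(\beta)$ for $b=s(a)$. Since $\la X\ra$ is strictly increasing, $\la X\ra_{\tau^Y_a}=\tau^{\beta}_{b}$ and on $\{\tau^Y_a<t<\zeta\}$ we have $\tau^\beta_b=\la X\ra_{\tau^Y_a}<\la X\ra_t$. Thus it remains to establish the purely Brownian fact that $L^b_u(\beta)>0$ for every $u>\tau^\beta_b$. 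By the strong Markov property $\wt\beta:=\beta_{\tau^\beta_b+\,\cdot}-b$ is a Brownian motion started at $0$, and $L^b_{\tau^\beta_b+v}(\beta)=L^0_v(\wt\beta)$; by L\'evy's theorem $L^0(\wt\beta)$ has the law of the running maximum of a Brownian motion, which is strictly positive for every $v>0$ almost surely (Blumenthal's zero-one law). Combining the displays gives $L^a_t(Y)=\rho(a)^{-1}L^b_{\la X\ra_t}(\beta)>0$ on $\{\tau^Y_a<t<\zeta\}$, and with the previous paragraph the claim follows.

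The genuinely routine core is the Brownian positivity via L\'evy's theorem; the main obstacle is the careful bookkeeping of the change-of-variables formulas for local times under the scale map $s$ and under the time change---in particular matching the left local time $L^{a-}_t(Y)$ with its Brownian counterpart, and handling the stochastic interval $[0,\zeta)$ when $\la X\ra$ may have a finite limit at $\zeta$, so that the Brownian motion $\beta$ and the identity $L^b_t(X)=L^b_{\la X\ra_t}(\beta)$ are only available up to the appropriate time.
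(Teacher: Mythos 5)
The paper gives no proof of Lemma~\ref{lem:set2} at all: it is imported verbatim as Theorem~2.7 of \cite{ChernyEngelbert:05}, so there is no in-paper argument to compare against. Your self-contained proof is correct, and it runs along the same lines as the reductions the authors carry out in Section~\ref{sec:pt} for the main theorems: kill the drift with the scale function, time-change via Dambis--Dubins--Schwarz, and finish with a purely Brownian fact (here, positivity of $L^0$ after the hitting time, via the strong Markov property and L\'evy's theorem). The places where the Engelbert--Schmidt hypotheses enter are exactly where they should: $\sigma^2(a)>0$ from~\eqref{eq:set2} is what makes $\{u\le t\colon Y_u=a\}$ Lebesgue-null (hence $L^a_t(Y)=L^{a-}_t(Y)$, disposing of the left local time) and what makes $\la \wt Y,\wt Y\ra$ strictly increasing (so that $\tau^Y_a<t<\zeta$ really translates into $\tau^\beta_{s(a)}<\la \wt Y,\wt Y\ra_t$); this is consistent with Remark~\ref{rem:set2}(ii) that the lemma fails without these conditions. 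Two steps are asserted rather than justified and deserve explicit references or short proofs: the transformation rule $L^{s(a)}_t(s(Y))=\rho(a)L^a_t(Y)$ (valid because $\rho$ is absolutely continuous and strictly positive, so $s$ is locally a strictly increasing difference of convex functions with continuous derivative; see the exercises on local time in Ch.~VI of Revuz--Yor), and the invariance of local time under the DDS time change, $L^{b}_t(X)=L^{b}_{\la X,X\ra_t}(\beta)$ on the stochastic interval $[0,\zeta)$. A cosmetic point: writing $2\mu(a)\int_0^t\mathbf 1_{\{Y_u=a\}}\,\dd u$ tacitly assumes $\mu(a)$ is finite; it is cleaner to note that $\int_0^t\mathbf 1_{\{Y_u=a\}}|\mu(Y_u)|\,\dd u=0$ because the time set is Lebesgue-null while the full drift integral is finite. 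None of this is a gap, only bookkeeping.
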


\begin{remarks}
\label{rem:set2}
(i) By Proposition~\ref{prop:set1} we have $\PP(\tau^Y_a<\zeta)>0$.
Hence, there exists $t\in(0,\infty)$ such that $\PP(\tau^Y_a<t<\zeta)>0$.

(ii) Let us note that the result of Lemma~\ref{lem:set2} no longer holds
if the coefficients $\mu$ and $\sigma$ of~\eqref{eq:set1} fail to satisfy
the Engelbert--Schmidt conditions (see Theorem~2.6 in~\cite{ChernyEngelbert:05}).
\end{remarks}

\begin{proof}[Proof of Theorem~\ref{th:set1}]
By the occupation times formula, $\PP$-a.s. we have
\begin{equation}
\label{eq:set11}
\int_0^t f(Y_u)\,\dd u=\int_0^t\frac f{\sigma^2}(Y_u)\,\dd\la Y,Y\ra_u
=\int_J\frac f{\sigma^2}(y)L^y_t(Y)\,\dd y,\quad t\in[0,\zeta).
\end{equation}
Then~\eqref{eq:set9} follows from the fact that $\PP$-a.s. on $\{t<\zeta\}$
the function $y\mapsto L^y_t(Y)$ is a cadlag function with a compact support in~$J$.

As for~\eqref{eq:set10}, it immediately follows from~\eqref{eq:set11} and Lemma~\ref{lem:set2}
in the case $x_0\in D$. If $x_0\notin D$, we first observe that $\eta_D=\tau^Y_\alpha\wedge\tau^Y_\beta$
(see Remark~\ref{rem:set1}), and hence
$\{\eta_D<\zeta\}=\{\tau^Y_\alpha<\zeta\}\cup\{\tau^Y_\beta<\zeta\}$.
If $\PP(\tau^Y_\alpha<\zeta)>0$, then, 
since 
$D$
is closed we have
$\alpha\in D$
(note that $(l,x_0)\cap D\ne\emptyset$ in this case because otherwise
$\alpha=l$ and $\PP(\tau^Y_\alpha<\zeta)=0$).
Thus, \eqref{eq:set10} on $\{\tau^Y_\alpha<\zeta\}$
follows from~\eqref{eq:set11} and Lemma~\ref{lem:set2}
applied with $a=\alpha\in D$. Similarly we get \eqref{eq:set10} on $\{\tau^Y_\beta<\zeta\}$.
This concludes the proof.
\end{proof}

\newpar{}
\label{par:set4}
As pointed out in the previous subsection,
it remains to study the convergence of the integral
\begin{equation}
\label{eq:set12}
\int_0^\zeta f(Y_u)\,\dd u
\end{equation}
for a nonnegative Borel function $f\colon J\to[0,\infty]$ satisfying
\begin{equation}
\label{eq:set13}
\frac f{\sigma^2}\in L^1_\loc(J).
\end{equation}
This study is performed in the following two theorems,
where we separately treat the cases $\PP(A)=1$ and $\PP(B_r\cup B_l\cup C_r\cup C_l)=1$
(see Propositions \ref{prop:set2} and~\ref{prop:set3}).
Below $\nu_L$ denotes the Lebesgue measure on~$J$.

\begin{theorem}
\label{th:set2}
Assume that the function $f\colon J\to[0,\infty]$ satisfies~\eqref{eq:set13}.
Let $s(r)=\infty$ and $s(l)=-\infty$.

(i) If $\nu_L(f>0)=0$, then
$$
\int_0^\zeta f(Y_u)\,\dd u=0\quad\PP\text{-a.s.}
$$

(ii) If $\nu_L(f>0)>0$, then
$$
\int_0^\zeta f(Y_u)\,\dd u=\infty\quad\PP\text{-a.s.}
$$
\end{theorem}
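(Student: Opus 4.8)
The plan is to convert the functional into a spatial integral of the local time via the occupation times formula, and then to use the recurrence of $Y$ (which holds precisely when $s(r)=\infty$ and $s(l)=-\infty$) to show that the local time at every level is $\PP$-a.s.\ unbounded. Under the hypotheses of the theorem, Proposition~\ref{prop:set3} gives $\PP(A)=1$, so $\zeta=\infty$ $\PP$-a.s. Letting $t\uparrow\infty$ in~\eqref{eq:set11} and noting that both $L^y_t(Y)$ and $t\mapsto\int_0^t f(Y_u)\,\dd u$ are nondecreasing, the monotone convergence theorem yields, $\PP$-a.s.,
$$
\int_0^\zeta f(Y_u)\,\dd u=\int_J\frac f{\sigma^2}(y)\,L^y_\infty(Y)\,\dd y,\qquad L^y_\infty(Y):=\lim_{t\to\infty}L^y_t(Y).
$$
Part~(i) is then immediate: if $\nu_L(f>0)=0$ then $\frac f{\sigma^2}=0$ $\nu_L$-a.e.\ (since $\sigma\neq0$), so the right-hand side is zero.

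For part~(ii) the crucial claim is that $L^a_\infty(Y)=\infty$ $\PP$-a.s.\ for each fixed $a\in J$. To establish it I would fix $c\in(a,r)$ and build a renewal structure by setting $R_0=\tau^Y_a$ and, for $k\geq1$, $U_k=\inf\{t>R_{k-1}\colon Y_t=c\}$ and $R_k=\inf\{t>U_k\colon Y_t=a\}$. On $A$ we have $\limsup_{t\to\infty}Y_t=r$ and $\liminf_{t\to\infty}Y_t=l$, so by continuity $Y$ reaches both $c$ and $a$ at arbitrarily large times; hence every $U_k$ and $R_k$ is finite $\PP$-a.s. Writing $\Delta_k=L^a_{R_k}(Y)-L^a_{R_{k-1}}(Y)$ for the local time accumulated at $a$ during the $k$-th cycle, the strong Markov property at the times $R_{k-1}$ (where $Y=a$) shows that $(\Delta_k)_{k\geq1}$ are i.i.d. Each $\Delta_k>0$ $\PP$-a.s.: applying Lemma~\ref{lem:set2} to the diffusion shifted to start at $a$ at time $R_{k-1}$ (whose hitting time of $a$ is $0$) gives strictly positive local time over any nondegenerate interval, and $R_k>R_{k-1}$. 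Therefore $L^a_{R_n}(Y)\geq\sum_{k=1}^n\Delta_k\to\infty$ as $n\to\infty$, and since $L^a_t(Y)$ is nondecreasing this forces $L^a_\infty(Y)=\infty$.

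It remains to pass from this per-level statement to the divergence of the spatial integral. By the joint measurability of $(y,\omega)\mapsto L^y_\infty(Y)$ and Tonelli's theorem, the fact that $\PP(L^y_\infty(Y)<\infty)=0$ for every $y$ implies that, for $\PP$-a.e.\ realisation, $L^y_\infty(Y)=\infty$ holds for $\nu_L$-a.e.\ $y\in J$. Since $\nu_L(f>0)>0$ and $\sigma\neq0$, the set $\{\frac f{\sigma^2}>0\}$ has positive Lebesgue measure; hence on this full-probability event the nonnegative integrand $\frac f{\sigma^2}(y)\,L^y_\infty(Y)$ equals $+\infty$ on a set of positive measure, so $\int_J\frac f{\sigma^2}(y)\,L^y_\infty(Y)\,\dd y=\infty$, as required.

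The main obstacle is the unboundedness of the local time, i.e.\ the claim $L^a_\infty(Y)=\infty$: the delicate points are the i.i.d.\ renewal decomposition of $L^a_\cdot(Y)$ obtained from the strong Markov property and, above all, the strict positivity of the increments $\Delta_k$, which I would secure through Lemma~\ref{lem:set2}. By comparison, the reduction via the occupation times formula and the final Tonelli argument are routine.
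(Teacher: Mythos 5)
Your proof is correct, but it takes a genuinely different route from the paper. The paper first passes to the natural scale, $\wt Y=s(Y)$, writes $\wt Y$ as a time-changed Brownian motion $B$ via Dambis--Dubins--Schwarz, uses $\PP(A)=1$ to get $\la\wt Y,\wt Y\ra_\infty=\infty$, and then changes variables so that the functional becomes $\int_\bbR\frac{\wt f}{\wt\sigma^2}(x)L^x_\infty(B)\,\dd x$; the conclusion follows from the classical fact that $L^x_\infty(B)\equiv\infty$ for all $x$ a.s. You instead stay with $Y$ itself: you pass to the limit in~\eqref{eq:set11}, and prove directly that $L^a_\infty(Y)=\infty$ a.s.\ for each level $a$ by an i.i.d.\ cycle decomposition of the local time over successive excursions between $a$ and a point $c$, with strict positivity of each increment supplied by Lemma~\ref{lem:set2}; a Tonelli argument then upgrades the per-level statement to ``$L^y_\infty(Y)=\infty$ for $\nu_L$-a.e.\ $y$, a.s.'' What the paper's route buys is economy: the time-change machinery is set up anyway for Theorem~\ref{th:set3}, and the infinitude of Brownian local time at infinity is quoted off the shelf, so no Markovian renewal structure is needed. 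What your route buys is self-containedness at the level of the diffusion: you never invoke the DDS theorem or properties of Brownian local time, at the price of invoking the strong Markov property of the (law-unique) solution of~\eqref{eq:set1} to get the i.i.d.\ structure of the increments $\Delta_k$ --- a standard fact under the Engelbert--Schmidt conditions, but one the paper does not list among its quoted propositions, so you should cite it explicitly (e.g.\ the Engelbert--Schmidt construction). The remaining steps (monotone convergence in~\eqref{eq:set11}, divergence of a sum of i.i.d.\ strictly positive variables, the measurability needed for Tonelli) are all sound.
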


Let us also note that $\zeta=\infty$ $\PP$-a.s. in the case $s(r)=\infty$, $s(l)=-\infty$.

In the remaining case $s(l)>-\infty$ or $s(r)<\infty$ we have
$$
\Omega=\left\{\lim_{t\uparrow\zeta}Y_t=r\right\}\cup\left\{\lim_{t\uparrow\zeta}Y_t=l\right\}\quad\PP\text{-a.s.}
$$
In the following theorem we investigate the convergence of~\eqref{eq:set12}
on $\{\lim_{t\uparrow\zeta}Y_t=r\}$. To this end we need to assume $s(r)<\infty$
because otherwise $\PP(\lim_{t\uparrow\zeta}Y_t=r)=0$ by Proposition~\ref{prop:set3}.

\begin{theorem}
\label{th:set3}
Assume that the function $f\colon J\to[0,\infty]$ satisfies~\eqref{eq:set13}.
Let $s(r)<\infty$.

(i) If
$$
\frac{(s(r)-s)f}{\rho\sigma^2}\in L^1_\loc(r-),
$$
then
$$
\int_0^\zeta f(Y_u)\,\dd u<\infty\quad\PP\text{-a.s. on }\left\{\lim_{t\uparrow\zeta}Y_t=r\right\}.
$$

(ii) If
$$
\frac{(s(r)-s)f}{\rho\sigma^2}\notin L^1_\loc(r-),
$$
then
$$
\int_0^\zeta f(Y_u)\,\dd u=\infty\quad\PP\text{-a.s. on }\left\{\lim_{t\uparrow\zeta}Y_t=r\right\}.
$$
\end{theorem}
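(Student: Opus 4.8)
The plan is to strip the drift by passing to natural scale, reduce the functional to one driven by Brownian motion through a Dambis--Dubins--Schwarz time change, and then read off the dichotomy from the behaviour of the Brownian local time near the target level, as described by the first Ray-Knight theorem (equivalently, by Williams' theorem together with the theory of Bessel processes).

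First I would set $X_t=s(Y_t)$. Since $s$ solves $\mu s'+\tfrac12\sigma^2 s''=0$ with $s'=\rho$, It\^o's formula shows that $X$ is a continuous local martingale on $[0,\zeta)$, valued in $(s(l),s(r))$, with $\dd\la X,X\ra_t=(\rho\sigma)^2(Y_t)\,\dd t$. By Dambis--Dubins--Schwarz there is a Brownian motion $\beta$ started at $s(x_0)$ with $X_t=\beta_{\la X,X\ra_t}$ for $t<\zeta$. Writing $A_t=\la X,X\ra_t$ and changing variables $u\mapsto v=A_u$, so that $\dd u=\dd v/(\rho\sigma)^2(s^{-1}(\beta_v))$, I obtain the pathwise identity
\begin{equation*}
\int_0^\zeta f(Y_u)\,\dd u=\int_0^{A_\zeta}g(\beta_v)\,\dd v,\qquad g:=\frac{f}{(\rho\sigma)^2}\circ s^{-1}\ \text{on}\ (s(l),s(r)).
\end{equation*}

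Next I would identify the endpoint of the Brownian integral. On $\{\lim_{t\uparrow\zeta}Y_t=r\}$ one has $\beta_v\in(s(l),s(r))$ for every $v<A_\zeta$ while $\beta_{A_t}=X_t\to s(r)$; since a Brownian path cannot remain in the bounded interval $(s(l),s(r))$ forever, $A_\zeta<\infty$ and, by continuity, $A_\zeta$ coincides with the first hitting time $\tau^\beta_{s(r)}$ of the level $s(r)$, reached before $s(l)$. (Both the finite-explosion case $C_r$ and the case $B_r$ of attaining $r$ only at time infinity are covered, since in either case $X$ is a bounded convergent martingale and hence $A_\zeta<\infty$.) Thus on this event the functional equals $\int_0^{\tau^\beta_{s(r)}}g(\beta_v)\,\dd v$. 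Moreover the substitution $y=s(x)$ turns \eqref{eq:set13} into $g\in L^1_\loc(s(l),s(r))$, so the contribution of any interval bounded away from $b:=s(r)$ is finite; only the behaviour of $g$ near $b$ is decisive.

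It then remains to characterise, for a Borel $g\ge0$ that is locally integrable below $b$, when $\int_0^{\tau^\beta_b}g(\beta_v)\,\dd v$ is finite. I would start from the occupation times formula
\begin{equation*}
\int_0^{\tau^\beta_b}g(\beta_v)\,\dd v=\int_{\bbR}g(y)\,L^y_{\tau^\beta_b}(\beta)\,\dd y
\end{equation*}
and invoke the first Ray-Knight theorem, which realises $(L^{b-x}_{\tau^\beta_b}(\beta))_{x\ge0}$ as a squared Bessel process of dimension $2$ started at $0$; in particular $\EE[L^y_{\tau^\beta_b}(\beta)]=2(b-y)$ as $y\uparrow b$. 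This makes finiteness of the integral near $b$ equivalent to $\int^{b}(b-y)g(y)\,\dd y<\infty$, and undoing $y=s(x)$ yields
\begin{equation*}
\int^{b}(b-y)g(y)\,\dd y=\int^{r}\frac{(s(r)-s)f}{\rho\sigma^2}(x)\,\dd x,
\end{equation*}
which is exactly the quantity governing cases (i) and (ii).

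The main obstacle is the final passage: the Ray-Knight computation controls only the \emph{mean} of the local-time field, whereas the theorem asserts an almost sure zero-one dichotomy. Upgrading ``finite/infinite in mean'' to ``finite/infinite a.s.'' requires a zero-one law for the squared Bessel process integrated against $g$, which is precisely the content I would isolate as the key Brownian lemma and establish either through Cherny's results on integral functionals of Bessel processes or directly from the Ray-Knight description. A secondary, more routine, difficulty is the careful bookkeeping of the time change near $\zeta$, in particular verifying that $A_\zeta=\tau^\beta_{s(r)}$ on $\{\lim_{t\uparrow\zeta}Y_t=r\}$ up to a null set.
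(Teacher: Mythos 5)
Your reduction to the Brownian problem is exactly the one in the paper: pass to natural scale $\wt Y=s(Y)$, apply Dambis--Dubins--Schwarz, identify $\la\wt Y,\wt Y\ra_\zeta=\tau^B_{s(r)}$ on $\{\lim_{t\uparrow\zeta}Y_t=r\}$, and transport the functional to $\int_0^{\tau^B_{s(r)}}\frac{\wt f}{\wt\sigma^2}(B_v)\,\dd v$; the change of variables $y=s(x)$ converting the criterion into $\frac{(s(r)-s)f}{\rho\sigma^2}\in L^1_\loc(r-)$ is also the paper's computation. (One small inaccuracy: $(s(l),s(r))$ need not be bounded and $\wt Y$ need not be a bounded martingale when $s(l)=-\infty$; the correct reason that $\la\wt Y,\wt Y\ra_\zeta<\infty$ on this event is simply that $B_{\la\wt Y,\wt Y\ra_t}$ converges to the finite value $s(r)$, which a Brownian path cannot do along an infinite time interval.)

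The genuine gap is the step you yourself flag as ``the main obstacle'': the almost-sure dichotomy for $\int_0^{\tau^B_b}g(B_v)\,\dd v$, which is the entire content of the paper's key Lemma~\ref{lem:bc1} and occupies two of its sections. Your Ray--Knight computation of $\EE\bigl[L^y_{\tau^B_b}(B)\bigr]=2(b-y)$ proves only the convergence half (finite mean implies a.s.\ finite, by Fubini). For the divergence half --- the assertion of part~(ii) --- an infinite mean does \emph{not} in general force the integral to be a.s.\ infinite, and you give no argument, only a pointer to ``Cherny's results or the Ray--Knight description''. The paper closes this in two ways. First proof: time-reverse $B$ at $\tau^B_r$ via Williams' theorem to obtain a three-dimensional Bessel process near $0$, and invoke Cherny's Theorem~2.2, which is precisely the zero-one law $xg(x)\notin L^1_\loc(0+)\Rightarrow\int_0^\eps g(\rho_u)\,\dd u=\infty$ a.s. Second proof (Delbaen--Shirakawa): after Ray--Knight it suffices to show that $\PP\bigl(\int_0^{r-x_0}f(r-u)W_u^2\,\dd u<\infty\bigr)>0$ forces $uf(r-u)\in L^1_\loc(0+)$; one fixes $M$ with $\gamma:=\PP(R)>0$ for $R=\{\int\le M\}$, uses Brownian scaling to pick $\delta$ with $\PP(|N(0,1)|\ge\delta)\ge1-\gamma/2$, deduces $\EE(W_u^2I_R)\ge\frac\gamma2\delta^2u$ uniformly in $u$, and applies Fubini to conclude $\int_0^{r-x_0}f(r-u)u\,\dd u<\infty$. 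Without one of these (or an equivalent zero-one law such as Jeulin's lemma), your argument establishes part~(i) but not part~(ii).
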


Clearly, Theorem~\ref{th:set3} has its analogue that describes the convergence
of~\eqref{eq:set12} on ${\{\lim_{t\uparrow\zeta}Y_t=l\}}$.

\section{Proofs of Theorems \protect\ref{th:set2} and~\protect\ref{th:set3}}
\label{sec:pt}
In this section we prove Theorems \ref{th:set2} and~\ref{th:set3}.
In the latter proof we apply Lemma~\ref{lem:bc1} below,
which will be proved in the next sections.

Let us set
\begin{equation}
\label{eq:pt1}
\wt Y_t=s(Y_t),\quad t\in[0,\zeta).
\end{equation}
Then
\begin{equation}
\label{eq:pt2}
d\wt Y_t=\wt\sigma(\wt Y_t)\,\dd W_t,\quad t\in[0,\zeta),
\end{equation}
where
$$
\wt\sigma(x)=(\rho\sigma)\circ s^{-1}(x),\quad x\in(s(l),s(r)).
$$
In particular, $\wt Y$ is a continuous local martingale
on the stochastic interval $[0,\zeta)$.
By the Dambis--Dubins--Schwarz theorem,
there exists a Brownian motion $B$ starting from $s(x_0)$
(possibly on an enlargement of the initial probability space) such that
\begin{equation}
\label{eq:pt3}
\wt Y_t=B_{\la\wt Y,\wt Y\ra_t}\quad\PP\text{-a.s.},\quad t\in[0,\zeta).
\end{equation}
Let us also introduce the function
$$
\wt f(x)=f\circ s^{-1}(x),\quad x\in(s(l),s(r)).
$$

\begin{proof}[Proof of Theorem~\ref{th:set2}]
Here $s(r)=\infty$ and $s(l)=-\infty$. Hence $\zeta=\infty$ $\PP$-a.s. and, moreover, $\PP(A)=1$
(see Propositions \ref{prop:set2} and~\ref{prop:set3}). Then \eqref{eq:pt1} implies that
$$
\PP\left(\limsup_{t\to\infty}\wt Y_t=\infty,\;\liminf_{t\to\infty}\wt Y_t=-\infty\right)=1.
$$
Now it follows from \eqref{eq:pt3} that $\la\wt Y,\wt Y\ra_\infty=\infty$ $\PP$-a.s.
We have
\begin{align*}
\int_0^\infty f(Y_u)\,\dd u
&=\int_0^\infty \wt f(\wt Y_u)\,\dd u
=\int_0^\infty \frac{\wt f}{\wt\sigma^2}\left(B_{\la\wt Y,\wt Y\ra_u}\right)\,\dd\la\wt Y,\wt Y\ra_u\\
&=\int_0^\infty \frac{\wt f}{\wt\sigma^2}(B_v)\,\dd v
=\int_\bbR \frac{\wt f}{\wt\sigma^2}(x)L^x_\infty(B)\,\dd x\quad\PP\text{-a.s.}
\end{align*}
The first equality above is clear (we used $\zeta=\infty$ $\PP$-a.s.),
the second follows from \eqref{eq:pt2} and~\eqref{eq:pt3},
the third is due to the continuity of $\la\wt Y,\wt Y\ra$ and the fact that
$\la\wt Y,\wt Y\ra_\infty=\infty$ $\PP$-a.s.,
and the last one follows from the occupation times formula
($L^x_t(B)$ denotes the local time of the Brownian motion $B$ at time~$t$
and at level~$x$). It remains to note that $\nu_L(f>0)>0$ is equivalent to $\nu_L(\wt f>0)>0$
and that for a Brownian local time,
$\PP$-a.s. it holds $L^x_\infty(B)\equiv\infty$ $\forall x\in\bbR$
(see e.g. \cite[Ch.~VI, \S~2]{RevuzYor:99}).
The proof is completed.
\end{proof}

\begin{proof}[Proof of Theorem~\ref{th:set3}]
Here $s(r)<\infty$, i.e. $\PP(\lim_{t\uparrow\zeta}Y_t=r)>0$.
let us set $R:=\{\lim_{t\uparrow\zeta}Y_t=r\}$ and observe that~\eqref{eq:pt1}
and~\eqref{eq:pt3}
imply
$$
R\equiv\left\{\lim_{t\uparrow\zeta}Y_t=r\right\}
=\left\{\lim_{t\uparrow\zeta}\wt Y_t=s(r)\right\}
=\left\{\lim_{t\uparrow\zeta}B_{\la\wt Y,\wt Y\ra_t}=s(r)\right\}.
$$
In particular,
\begin{equation}
\label{eq:pt4}
\la\wt Y,\wt Y\ra_\zeta=\tau^B_{s(r)}\quad\PP\text{-a.s. on }R,
\end{equation}
where $\tau^B_{s(r)}$ denotes the hitting time of the level $s(r)$
by the Brownian motion~$B$.
Let us note that $\zeta$ may be finite or infinite on~$R$
(see Propositions \ref{prop:set4} and~\ref{prop:set5} for details),
but it follows from~\eqref{eq:pt4} that $\la\wt Y,\wt Y\ra_\zeta$
is in either case finite on~$R$.
Similarly to the previous proof we get
\begin{equation}
\label{eq:pt5}
\int_0^\zeta f(Y_u)\,\dd u=\int_0^{\tau^B_{s(r)}}
\frac{\wt f}{\wt\sigma^2}(B_v)\,\dd v\quad\PP\text{-a.s. on }R.
\end{equation}
The question of convergence of the integral in the right-hand side of~\eqref{eq:pt5}
is studied in Lemma~\ref{lem:bc1} below.
It is easy to obtain from~\eqref{eq:set13} that $\frac{\wt f}{\wt\sigma^2}\in L^1_\loc(s(J))$,
which means that Lemma~\ref{lem:bc1} can be applied (see~\eqref{eq:bc1}).
Thus, to study the convergence of the integral in the right-hand side of~\eqref{eq:pt5}
we need to check whether
$$
(s(r)-x)\frac{\wt f}{\wt\sigma^2}(x)\in L^1_\loc(s(r)-)
$$
(the notation ``$f(x)\in\fM$'' for a function $f$ and a class of functions $\fM$
is understood to be synonymous to
``$f\in\fM$''). We have
$$
\int_{s(\cdot)}^{s(r)}\frac{(s(r)-x)f(s^{-1}(x))}{\rho^2(s^{-1}(x))\sigma^2(s^{-1}(x))}\,\dd x
=\int_\cdot^r\frac{(s(r)-s(y))f(y)}{\rho(y)\sigma^2(y)}\,\dd y
$$
(recall that $s'=\rho$). Now the statement of Theorem~\ref{th:set3}
follows from~\eqref{eq:pt5} and Lemma~\ref{lem:bc1}.
\end{proof}

\section{The Setting and Notation in the Brownian Case}
\label{sec:bc}
It remains to prove Lemma~\ref{lem:bc1} below.
From now on let us consider a Brownian motion $B$ starting from $x_0\in\bbR$.
We will extensively use the notation $\tau^B_a$ ($a\in\bbR$) for the stopping time
defined as in~\eqref{eq:set6}. Below we use the notation ``$f(x)\in\fM$''
for a function $f$ and a class of functions $\fM$ as a synonym for ``$f\in\fM$''.

\begin{lemma}
\label{lem:bc1}
Let $B$ be a Brownian motion starting from $x_0\in\bbR$ and $x_0<r<\infty$.
Assume that the function $f\colon I\to[0,\infty]$ with $I:=(-\infty,r)$ satisfies
\begin{equation}
\label{eq:bc1}
f\in L^1_\loc(I).
\end{equation}

(i) If $(r-x)f(x)\in L^1_\loc(r-)$, then
$$
\int_0^{\tau^B_r}f(B_u)\,\dd u<\infty\quad\PP\text{-a.s.}
$$

(ii) If $(r-x)f(x)\notin L^1_\loc(r-)$, then
$$
\int_0^{\tau^B_r}f(B_u)\,\dd u=\infty\quad\PP\text{-a.s.}
$$
\end{lemma}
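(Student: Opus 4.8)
The plan is to reduce everything to the behaviour of Brownian local time at the terminal level $r$, using the hitting time $\tau^B_r$ as a natural time change and exploiting the explicit description of local times stopped at $\tau^B_r$ afforded by the first Ray--Knight theorem. The starting point is the occupation times formula: since $B$ is stopped when it first reaches $r$ and starts below $r$, $\PP$-a.s.\ we have
\begin{equation*}
\int_0^{\tau^B_r}f(B_u)\,\dd u=\int_{-\infty}^r f(x)\,L^x_{\tau^B_r}(B)\,\dd x.
\end{equation*}
By~\eqref{eq:bc1} the function $f$ is locally integrable away from $r$, and the local time $x\mapsto L^x_{\tau^B_r}(B)$ has compact support and is continuous, so the only possible source of divergence of this integral is the behaviour of the integrand as $x\uparrow r$. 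Thus the whole question becomes: does $\int^r f(x)L^x_{\tau^B_r}(B)\,\dd x$ converge, and this is governed by the rate at which $L^x_{\tau^B_r}(B)$ decays to $0$ as $x\uparrow r$.

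The key analytic input is the first Ray--Knight theorem, which describes the process $\{L^{r-a}_{\tau^B_r}(B)\}_{a\ge 0}$, indexed by the distance $a=r-x$ below the level $r$, as a squared Bessel process. Precisely, running the space variable downward from $r$, the local time starts at $0$ (since $L^r_{\tau^B_r}(B)=0$, as $B$ only just reaches $r$) and behaves for small $a>0$ like a $\mathrm{BESQ}^2$ process started at $0$, so that $L^{r-a}_{\tau^B_r}(B)/a\to 1$ in an appropriate sense and, more usefully, $L^{r-a}_{\tau^B_r}(B)$ is $\PP$-a.s.\ comparable to $a$ near $a=0$. The substitution $a=r-x$ then turns the convergence of $\int^r f(x)L^x_{\tau^B_r}(B)\,\dd x$ into the convergence of $\int_{0+}(r-x)\,f(x)\,\dd x$ up to a strictly positive random factor that is bounded and bounded away from $0$ on a neighbourhood of $a=0$. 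This is exactly the dichotomy $(r-x)f(x)\in L^1_\loc(r-)$ versus $\notin L^1_\loc(r-)$ stated in parts~(i) and~(ii).

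Carrying this out, for part~(i) I would note that on the event that the local time is bounded near $r$ (which holds a.s.\ since the squared Bessel process has continuous paths), the comparison $L^{r-a}_{\tau^B_r}(B)\le C a$ for $a$ small gives $\int^r f(x)L^x_{\tau^B_r}(B)\,\dd x\le C\int^r(r-x)f(x)\,\dd x<\infty$. For part~(ii) I would use the lower bound furnished by the squared Bessel description: $\PP$-a.s.\ $L^{r-a}_{\tau^B_r}(B)\ge c\,a$ for all sufficiently small $a>0$, whence divergence of $\int^r(r-x)f(x)\,\dd x$ forces divergence of the integral functional. The zero-one nature of the conclusion (the statements hold $\PP$-a.s., not merely with positive probability) reflects that the squared Bessel process is a.s.\ strictly positive on $(0,\infty)$ and has a deterministic linear growth rate at its origin, so the random comparison constants exist a.s.\ and the dichotomy is deterministic.

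The main obstacle I expect is making the comparison $L^{r-a}_{\tau^B_r}(B)\asymp a$ near $a=0$ precise enough to drive both directions simultaneously and almost surely, rather than merely in law or in probability. A squared Bessel process of dimension $2$ started at $0$ satisfies $Z_a/a\to 1$ in probability but its almost-sure small-$a$ behaviour has logarithmic fluctuations, so one cannot literally sandwich $L^{r-a}_{\tau^B_r}(B)$ between $c_1 a$ and $c_2 a$ on a full-probability event. The correct route, which is presumably the subtlety the paper handles via Delbaen--Shirakawa's idea or the lemma of Jeulin, is to avoid pointwise pathwise bounds and instead argue directly about the integral: one shows that $\int_{0+}(r-x)f(x)\,\dd x<\infty$ is equivalent to finiteness of $\mathsf E\int^r f(x)L^x_{\tau^B_r}(B)\,\dd x$ (using $\mathsf E L^{r-a}_{\tau^B_r}(B)=a$, a clean consequence of Ray--Knight), and then upgrades the expectation statement to an almost-sure zero-one statement using the strict positivity and scaling of the squared Bessel process, or equivalently the zero-one law for integrals of Brownian local time against a measure. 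Thus the genuine work lies not in the change of variables but in converting an $L^1$-in-expectation criterion into a pathwise almost-sure dichotomy.
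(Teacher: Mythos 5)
Your reduction is exactly the paper's second proof up to the point where the work begins: occupation times formula, disposal of the integral over $(-\infty,x_0]$ using~\eqref{eq:bc1} and compact support of the local time, and the first Ray--Knight theorem identifying $\bigl(L^{r-a}_{\tau^B_r}(B)\bigr)_{a\ge0}$ with a squared two-dimensional Bessel process started at $0$. Part~(i) is also fine once you discard the pathwise bound $L^{r-a}_{\tau^B_r}(B)\le Ca$ (which, as you note, fails a.s.\ by the law of the iterated logarithm) and use instead the Fubini computation $\EE\int_0^{r-x_0}f(r-u)W_u^2\,\dd u=\int_0^{r-x_0}uf(r-u)\,\dd u$; finiteness of the expectation gives a.s.\ finiteness, which is all that is needed.

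The genuine gap is in part~(ii). Your first route, a pathwise lower bound $L^{r-a}_{\tau^B_r}(B)\ge ca$ for small $a$, is not available: the squared Bessel process of dimension~$2$ started at $0$ has $\liminf_{a\downarrow0}\eta_a/a=0$ a.s., so no such random $c>0$ exists on a full-measure event. Your fallback is to invoke ``the zero-one law for integrals of Brownian local time against a measure'' to upgrade divergence of the expectation to a.s.\ divergence of the integral -- but that zero-one law \emph{is} the content of part~(ii) (it is the statement classically proved via Ray--Knight plus Jeulin's lemma or Shepp's dichotomy), so appealing to it is circular in a proof whose point is to establish it by elementary means. The paper closes this gap with the Delbaen--Shirakawa argument: suppose $\int_0^{r-x_0}f(r-u)W_u^2\,\dd u<\infty$ with positive probability; pick $M$ so that $R:=\{\int_0^{r-x_0}f(r-u)W_u^2\,\dd u\le M\}$ has $\PP(R)=\gamma>0$, and choose $\delta>0$ with $\PP(|N(0,1)|\ge\delta)\ge1-\gamma/2$. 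By Brownian scaling $\PP(W_u^2\ge\delta^2u)$ is independent of $u$, so $\EE(W_u^2I_R)\ge\delta^2u\,\PP(R\cap\{W_u^2\ge\delta^2u\})\ge\tfrac\gamma2\delta^2u$ uniformly in $u$; Fubini then gives $M\ge\EE\bigl[I_R\int_0^{r-x_0}f(r-u)W_u^2\,\dd u\bigr]\ge\tfrac\gamma2\delta^2\int_0^{r-x_0}uf(r-u)\,\dd u$, forcing $uf(r-u)\in L^1_\loc(0+)$. This uniform-in-$u$ lower bound on the truncated second moment is the missing idea; without it, or some substitute of comparable strength, your proposal does not prove~(ii).
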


In Sections~\ref{sec:fp} and~\ref{sec:sp} we give two different proofs of Lemma~\ref{lem:bc1}.

\section{First Proof of Lemma~\protect\ref{lem:bc1}}
\label{sec:fp}
This method is based on Williams' theorem (see~\cite[Ch.~VII, Cor.~4.6]{RevuzYor:99})
and Cherny's investigation of convergence of integral functionals of Bessel processes
(see~\cite{Cherny:01}).

By the occupation times formula and~\eqref{eq:bc1}, $\PP$-a.s. we get
\begin{equation}
\label{eq:fp1}
\int_0^t f(B_u)\,\dd u<\infty,\quad t\in[0,\tau^B_r).
\end{equation}
By $\rho=(\rho_t)_{t\in[0,\infty)}$ we denote a three-dimensional Bessel process starting from~0.
Let us set
$$
\xi=\sup\{t\in[0,\infty)\colon \rho_t=r-x_0\}
$$
(note that $\xi$ is a finite random variable because $\rho_t\to\infty$~a.s.).
By Williams' theorem,
\begin{equation}
\label{eq:fp2}
\Law\left(r-B_{\tau^B_r-t};t\in[0,\tau^B_r)\right)
=\Law\left(\rho_t;t\in[0,\xi)\right),
\end{equation}
where ``$\Law$'' means distribution.
It follows from Theorem~2.2 in~\cite{Cherny:01} that,
for a nonnegative function~$g$,

\smallskip\noindent
(A) $xg(x)\in L^1_\loc(0+)$ implies that a.s. it holds
$$
\exists\eps>0\;\;\int_0^\eps g(\rho_u)\,\dd u<\infty;
$$

\smallskip\noindent
(B) $xg(x)\notin L^1_\loc(0+)$ implies that a.s. it holds
$$
\forall\eps>0\;\;\int_0^\eps g(\rho_u)\,\dd u=\infty.
$$

\noindent
By \eqref{eq:fp1}, \eqref{eq:fp2} and (A),~(B),
the question reduces to whether $xf(r-x)\in L^1_\loc(0+)$,
or, equivalently, to whether $(r-x)f(x)\in L^1_\loc(r-)$.
This concludes the proof.

\section{Second Proof of Lemma~\protect\ref{lem:bc1}}
\label{sec:sp}
We take the idea for this proof from Theorem~1.4 in Delbaen and Shirakawa~\cite{DelbaenShirakawa:02b}.
The method is based on the first Ray-Knight theorem (see \cite[Ch.~XI, Th.~2.2]{RevuzYor:99}).

By the occupation times formula,
\begin{equation}
\label{eq:sp1}
\int_0^{\tau^B_r} f(B_u)\,\dd u 
=\int_{-\infty}^r f(x)L^x_{\tau^B_r}(B)\,\dd x\quad\PP\text{-a.s.}
\end{equation}
Since $\PP$-a.s. the mapping $x\mapsto L^x_{\tau^B_r}(B)$ is a continuous function with
a compact support in~$\bbR$, we get from~\eqref{eq:bc1} that
$$
\int_{-\infty}^{x_0} f(x)L^x_{\tau^B_r}(B)\,\dd x<\infty\quad\PP\text{-a.s.}
$$
By~\eqref{eq:sp1}, the question of whether $\int_0^{\tau^B_r} f(B_u)\,\dd u$ is finite
reduces to the question of whether $\int_{x_0}^r f(x)L^x_{\tau^B_r}(B)\,\dd x$ is finite,
or, equivalently, to 
\begin{equation}
\label{eq:sp2}
\text{whether}\quad
\int_0^{r-x_0}f(r-u)L^{r-u}_{\tau^B_r}(B)\,\dd u
\quad\text{is finite.}
\end{equation}

Let $\ol W$ and $\wt W$ be independent Brownian motions starting from~0.
Let us set
\begin{equation}
\label{eq:sp3}
\eta_t=\ol W_t^2+\wt W_t^2,
\end{equation}
i.e. $\eta=(\eta_t)_{t\in[0,\infty)}$ is a squared two-dimensional Bessel process starting from~0.
It follows from the first Ray-Knight theorem that
\begin{equation}
\label{eq:sp4}
\Law\left(L^{r-u}_{\tau^B_r};u\in[0,r-x_0]\right)
=\Law\left(\eta_u;u\in[0,r-x_0]\right).
\end{equation}
In what follows we prove that, for a Brownian motion $W$ starting from~0,

\smallskip\noindent
(A) $xf(r-x)\in L^1_\loc(0+)$ implies that
$$
\int_0^{r-x_0}f(r-u)W_u^2\,\dd u<\infty\quad\text{a.s.};
$$

\smallskip\noindent
(B) $xf(r-x)\notin L^1_\loc(0+)$ implies that
$$
\int_0^{r-x_0}f(r-u)W_u^2\,\dd u=\infty\quad\text{a.s.}
$$

\noindent
Together with \eqref{eq:sp2}--\eqref{eq:sp4} this will complete the proof of Lemma~\ref{lem:bc1}.

By Fubini's theorem,
$$
\EE\int_0^{r-x_0}f(r-u)W_u^2\,\dd u=\int_0^{r-x_0}f(r-u)u\,\dd u,
$$
so (A) is immediate (recall that \eqref{eq:bc1} holds).

In order to prove (B) we assume that
\begin{equation}
\label{eq:sp5}
\PP\left(\int_0^{r-x_0}f(r-u)W_u^2\,\dd u<\infty\right)>0.
\end{equation}
Then there exists a sufficiently large $M<\infty$ such that
$$
\gamma:=\PP(R)>0,\quad\text{where}\quad
R:=\left\{\int_0^{r-x_0}f(r-u)W_u^2\,\dd u\le M\right\}.
$$
Let us note that, for any positive $\delta$ and $u$, the probability
$$
\PP(W_u^2\ge\delta^2u)=\PP(|W_u/\sqrt{u}|\ge\delta)=\PP(|N(0,1)|\ge\delta)
$$
does not depend on~$u$. We take a sufficiently small $\delta>0$ such that
$$
\PP(|N(0,1)|\ge\delta)\ge1-\frac\gamma2.
$$
Then, for any~$u$,
$$
\EE(W_u^2I_R)\ge\delta^2u\PP(R\cap\{W_u^2\ge\delta^2u\})\ge\frac\gamma2\delta^2u.
$$
By Fubini's theorem,
$$
\EE\left[I_R\int_0^{r-x_0}f(r-u)W_u^2\,\dd u\right]
=\int_0^{r-x_0}f(r-u)\EE(W_u^2I_R)\,\dd u
\ge\frac\gamma2\delta^2\int_0^{r-x_0}f(r-u)u\,\dd u.
$$
The left-hand side is finite as on the event $R$ the integral is not greater than~$M$.
Thus, \eqref{eq:sp5} implies $uf(r-u)\in L^1_\loc(0+)$, which proves~(B)
and completes the proof of Lemma~\ref{lem:bc1}.

\bibliographystyle{abbrv}
\bibliography{refs}
\end{document}